\definecolor{newcolor}{rgb}{.8,.349,.1}
\newcommand{\Lag}{\mathrm Lag}
\newcommand{\entreguillemets}[1]{``{#1}''} 
\newtheorem{observation}{Observation}
\newtheorem{theorem}{Theorem}
\newtheorem{definition}{Definition}
\begin{document}



\title{Notions of optimal transport theory and how to implement them on a computer}

\author{Bruno L\'{e}vy and Erica Schwindt}



\maketitle

\begin{abstract}
  This article gives an introduction to optimal transport, a mathematical
theory that makes it possible to measure distances between
functions (or distances between more general objects), to
interpolate between objects or to enforce mass/volume conservation
in certain computational physics simulations. Optimal transport
is a rich scientific domain, with active research communities, both on its theoretical
aspects and on more applicative considerations, such as geometry
processing and machine learning. This article aims at explaining
the main principles behind the theory of optimal transport, introduce
the different involved notions, and more importantly, how they
relate, to let the reader grasp an intuition of the elegant
theory that structures them. Then we will consider a specific setting,
called semi-discrete, where a continuous function is transported to a discrete
sum of Dirac masses. Studying this specific setting naturally leads to an
efficient computational algorithm, that uses classical notions of computational
geometry, such as a generalization of Voronoi diagrams called Laguerre diagrams.
\end{abstract}




\section{Introduction}
This article presents an introduction to optimal transport.
It summarizes and complements a series of conferences given by B. L\'evy between
2014 and 2017. 
The presentations stays at an elementary level, that corresponds to a computer scientist's
vision of the problem. In the article, we stick to using standard notions of analysis
(functions, integrals) and linear algebra (vectors, matrices), and give an intuition of
the notion of measure. The main objective of the presentation is to understand the overall
structure of the reasoning \footnote{Teach principles, not equations. [R. Feynman]}, and
to follow a continuous path from the theory to an efficient
algorithm that can be implemented in a computer. \\


Optimal transport, initially studied by Monge, \cite{Monge1784}, is a
very general mathematical framework that can be used to model a wide
class of application domains. In particular, it is a natural
formulation for several fundamental questions in computer graphics
\cite{DBLP:journals/focm/Memoli11,DBLP:journals/cgf/Merigot11,DBLP:journals/tog/BonneelPPH11},
because it makes it possible to define new ways of \emph{comparing} functions,
of measuring \emph{distances} between functions and
\emph{interpolating} between two (or more) functions~:

\begin{figure}
      \centerline{
         \includegraphics[width=\columnwidth]{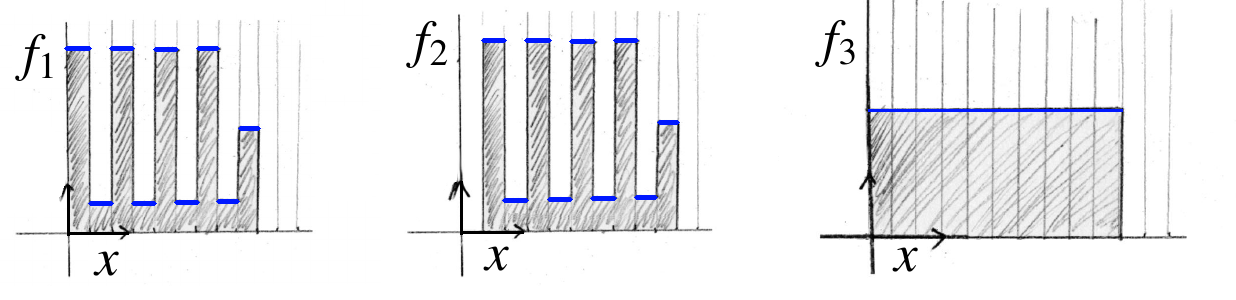}
      }
      \caption{
        Comparing functions: one would like to say that $f_1$ is nearer to $f_2$ than $f_3$, but
        the classical $L_2$ norm \entreguillemets{does not see} that the graph of $f_2$ corresponds
        to the graph of $f_1$ slightly shifted along the $x$ axis.
      }
      \label{fig:distance_fonction}
\end{figure}  

\paragraph{Comparing functions} Consider the functions $f_1$, $f_2$ and $f_3$ in Figure \ref{fig:distance_fonction}.
Here we have chosen a function $f_1$ with a wildly oscillating graph, and a function $f_2$ obtained by translating the graph
of $f_1$ along the $x$ axis. The function $f_3$ corresponds to the mean value
of $f_1$ (or $f_2$). If one measures the relative distances between these functions using the classical $L_2$ norm,
that is  $d_{L_2}(f,g) = \int (f(x) - g(x))^2 dx$, one will find that $f_1$ is nearer to $f_3$ than $f_2$. Optimal transport
makes it possible to define a distance that will take into account that the graph of $f_2$ can be obtained from $f_1$ through
a translation (like here), or through a deformation of the graph of $f_1$. From the point of view of this new distance,
the function $f_1$ will be nearer to $f_2$ than to $f_3$.

\begin{figure}
      \centerline{
         \includegraphics[width=\columnwidth]{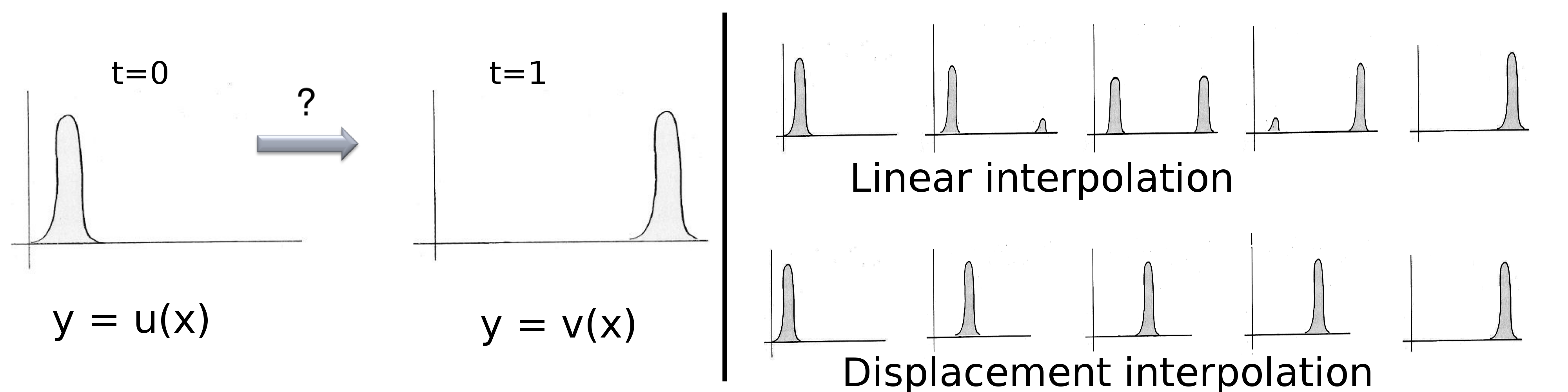}
      }
      \caption{
        Interpolating between two functions: linear interpolation makes a hump disappear
        while the other hump appears; displacement interpolation, stemming from optimal transport,
        will displace the hump as expected.
      }
      \label{fig:interpolation}
\end{figure}  

\paragraph{Interpolating between functions:} Now consider the functions $u$ and $v$
in Figure \ref{fig:interpolation}. Here we suppose that $u$ corresponds
to a certain physical quantity measured at an initial time $t=0$ and that $v$
corresponds to the same phenomenon measured at a final time $t=1$. The problem
that we consider now consists in reconstructing what happened between $t=0$ and
$t=1$. If we use linear interpolation (Figure \ref{fig:interpolation}, top-right),
we will see the left hump progressively disappearing while the right hump progressively
appears, which is not very realistic if the functions represent for instance a propagating
wave. Optimal transport makes it possible to define another type of interpolation
(Mc. Cann's displacement interpolation, Figure
\ref{fig:interpolation}, bottom-right), that will progressively displace and
morph the graph of $u$ into the graph of $v$. \\

Optimal transport makes it possible to define a \emph{geometry} of
a space of functions\footnote{or more general objects, called probability
measures, more on this later.}, and thus gives a definition of \emph{distance}
in this space, as well as means of interpolating between different functions,
and in general, defining the barycenter of a weighted family of functions, in
a very general context. Thus, optimal transport appears as a fundamental tool in many
applied domains. In computer graphics, applications were proposed, to compare and
interpolate objects of diverse natures \cite{DBLP:journals/tog/BonneelPPH11}, to generate
lenses that can concentrate light to form caustics in a prescribed manner
\cite{DBLP:journals/corr/abs-1708-04820,
  DBLP:journals/tog/SchwartzburgTTP14}. Moreover, optimal transport
defines new tools that can be used to discretize Partial Differential
Equations, and define new numerical solution mechanisms \cite{JKO2}.
This type of numerical solution mechanism can be used to simulate
for instance fluids \cite{Gallouet2017}, with spectacular
applications and results in computer graphics
\cite{DBLP:journals/tog/GoesWHPD15}. \\

\label{chap5:sec:ot:OT}

The two sections that follow are partly inspired by \cite{OTON}, \cite{OTintro}, \cite{MAEintro} and \cite{OTuserguide},
but stay at an elementary level. Here the main goal is to give an intuition of the different concepts,
and more importantly an idea of the way the relate together. Finally we will see how they can be
directly used to design a computational algorithm with very good performance, that can be used in practice
in several application domains.


\begin{figure}
      \centerline{
         \includegraphics[width=\columnwidth]{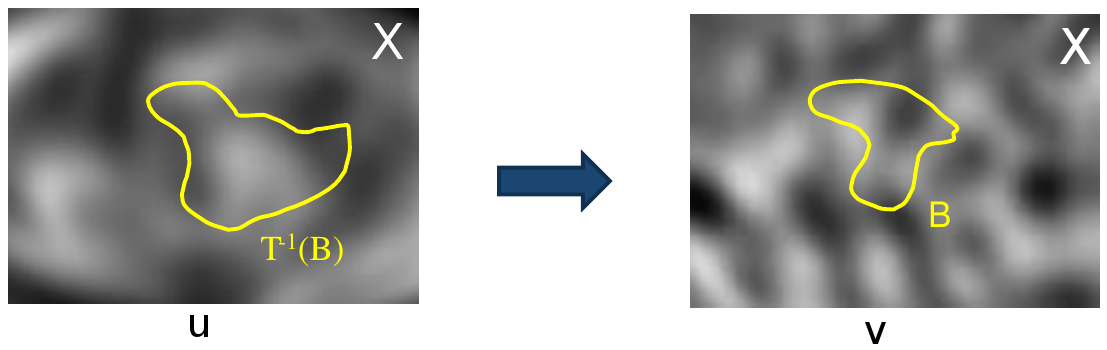}
      }
      \caption{
        Given two terrains defined by their height functions $u$ and $v$, symbolized here
        as gray levels, Monge's problem consists in transforming one terrain into the other one
        by moving matter through an application $T$. This application needs to satisfy a mass
        conservation constraint.
      }
      \label{fig:monge}
\end{figure}  

\section{Monge's problem}

The initial optimal transport problem was first introduced and
studied by Monge, right before the French revolution \cite{Monge1784}.
We first give an intuitive idea of the problem, then quickly introduce
the notion of \emph{measure}, that is necessary to formally state the
problem in its most general form and to analyze it.

\subsection{Intuition} Monge's initial motivation to study this problem
was very practical: supposing you have an army of workmen, how can you
transform a terrain with an initial landscape into a given desired target
landscape, while minimizing the total amount of work ?

Monge's initial problem statement was as follows:
$$
\begin{array}{l}
   \inf\limits_{T:  X \rightarrow X} \int\limits_X c(x, T(x)) u(x) dx \\[6mm]
   \mbox{\ \ \ \  subject to: } \\[6mm]
   \forall B \subset X, \int\limits_{T^{-1}(B)} u(x)dx = \int\limits_B v(x)dx
\end{array}   
$$
where $X$ is a subset of ${\mathbb R}^2$, $u$ and $v$ are two positive functions
defined on $X$ and such that $\int_X u(x) dx$ = $\int_Y v(x)dx$, and $c(\cdot,\cdot)$ is
a convex distance (the Euclidean distance in Monge's initial problem statement). 

The functions $u$ and $v$ represent the height of the current landscape and the height
of the target landscape respectively (symbolized as gray levels in Figure \ref{fig:monge}).
The problem consists in finding (if it exists) a function $T$ from $X$ to $X$ that
transforms the current landscape $u$ into the desired one $v$, while minimizing the product
of the amount of transported earth $u(x)$ with the distance $c(x,T(x))$ to which it was transported.
Clearly, the amount of earth is conserved during transport, thus the total quantity of earth should
be the same in the source and target landscapes (the integrals of $u$ and $v$ over $X$ should coincide).
This \emph{global} matter conservation constraint needs to be completed with a \emph{local} one. The local
matter conservation constraint enforces that in the target landscape, the quantity of earth received in
any subset $B$ of $X$ corresponds to what was transported here, that is the quantity of earth initially
present in the pre-image $T^{-1}(B)$ of $B$ under $T$. Without this constraint, one could locally create
matter in some places and annihilate matter in other places in a counterbalancing way. A map $T$
that satisfies the local mass conservation constraint is called a \emph{transport map}.

\begin{figure}
      \centerline{
         \includegraphics[width=5cm]{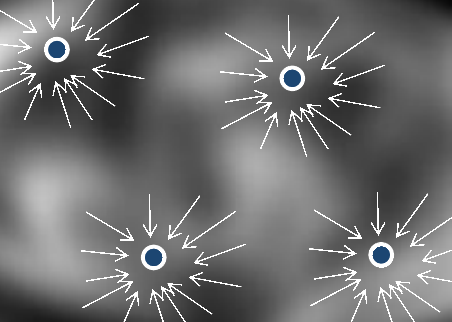}
      }
      \caption{
         Transport from a function (gray levels) to a discrete point-set
         (blue disks).
      }
      \label{fig:discrete1}
\end{figure}  

\subsection{Monge's problem with measures}
\label{sect:monge_measure}
We now suppose that instead of a \entreguillemets{target landscape}, we wish to
transport earth (or a resource) towards a set of points (that will be denoted by $Y$ for now on), that
represent for instance a set of factories that exploit a resource, see 
Figure \ref{fig:discrete1}. Each factory wishes to receive a certain quantity of resource
(depending for instance of the number of potential customers around the factory). Thus, the
function $v$ that represents the \entreguillemets{target landscape} is replaced with a function
on a finite set of points. However, if a function $v$ is zero everywhere except on a finite set
of points, then its integral over $X$ is also zero. This is a problem, because for instance
one cannot properly express the mass conservation constraint. For this reason, the notion
of function is not rich enough for representing this configuration. One can use instead
\emph{measures} (more on this below), and associate with each factory a \emph{Dirac mass} weighted by the quantity
of resource to be transported to the factory. 

From now on, we will use measures $\mu$ and $\nu$ to represent the
\entreguillemets{current landscape} and the \entreguillemets{target
  landscape}.  These measures are supported by sets $X$ and $Y$, that
may be different sets (in the present example, $X$ is a subset of
${\mathbb R}^2$ and $Y$ is a discrete set of points). Using measures
instead of function not only makes it possible to study our
\entreguillemets{transport to discrete set of factories} problem, but
also it can be used to formalize computer objects (meshes) and
directly leads to a computational algorithm. This algorithm is very
elegant because it is a \emph{verbatim computer translation of the
  mathematical theory} (see \S\ref{sect:algo}).  In this particular
setting, translating from the mathematical language to the algorithmic
setting does not require to make any approximation. This is made
possible by the generality of the notion of measure.

The reader who wishes to learn more on measure theory may refer to
the textbook \cite{TaoMeasureTheory}. To keep the length
of this article reasonable, we will not give here the formal definition
of a measure. In our context, one can think of a measure as a
\entreguillemets{function} that can be only queried using integrals
and that can be \entreguillemets{concentrated} on very small sets (points).
The following table can be used to intuitively translate from the
\entreguillemets{language of functions} to the \entreguillemets{language of measures}~:
$$
\begin{array}{|c|c|}
    \hline
    \mbox{Function $u$}    & \mbox{Measure $\mu$ } \\[2mm]
    \hline    
    \int_B u(x) dx      & \mu(B) \mbox{ or } \int_B d\mu \\[2mm]
    \int_B f(x) u(x) dx & \int_B f(x) d\mu \\[2mm]
    u(x)                & N/A \\
    \hline    
\end{array}
$$
(Note: in contrast with functions, measures cannot be evaluated at a point,
 they can be only integrated over domains). \\

In its version with measures, Monge's problem can be stated as follows:

\begin{equation}
  \begin{array}{l}
    \inf\limits_{T: X\rightarrow Y} \int\limits_X c(x,T(x)) d\mu
    \  \mbox{ subject to }\ \nu = T\sharp\mu
    \end{array}
    \label{eqn:monge} \tag{M}
\end{equation}
where $X$ and $Y$ are Borel sets (that is, sets that can be measured),
$\mu$ and $\nu$ are two measures on $X$ and $Y$ respectively such that $\mu(X)
= \nu(Y)$ and $c(\cdot,\cdot)$ is a convex distance.  The constraint $\nu =
T\sharp\mu$, that reads \entreguillemets{$T$ pushes $\mu$ onto $\nu$}
corresponds to the local mass conservation constraint. Given a
measure $\mu$ on $X$ and a map $T$ from $X$ to
$Y$, the measure $T\sharp\mu$ on $Y$, called \entreguillemets{the
pushforward of $\mu$ by $T$}, is such that $T\sharp\mu(B)
= \mu(T^{-1}(B))$ for all Borel set $B \subset Y$. Thus, the local mass
conservation constraint means that $\mu(T^{-1}(B))
= \nu(B)$ for all Borel set $B$ $\subset$ $Y$. \\

The local mass conservation constraint makes the problem very
difficult: imagine now that you want to implement a computer
program that enforces it: the constraint concerns
\emph{all the subsets} $B$ of $Y$. Could you imagine an algorithm
that just \emph{tests} whether a given map satisfies it ?
What about \emph{enforcing} it ? We will see below a series of
transformations of the initial problem into equivalent problems,
where the constraint becomes \emph{linear}. We will finally end up
with a simple convex optimization problem, that can be solved
numerically using classical methods. \\

\begin{figure}
      \centerline{
         \includegraphics[width=10cm]{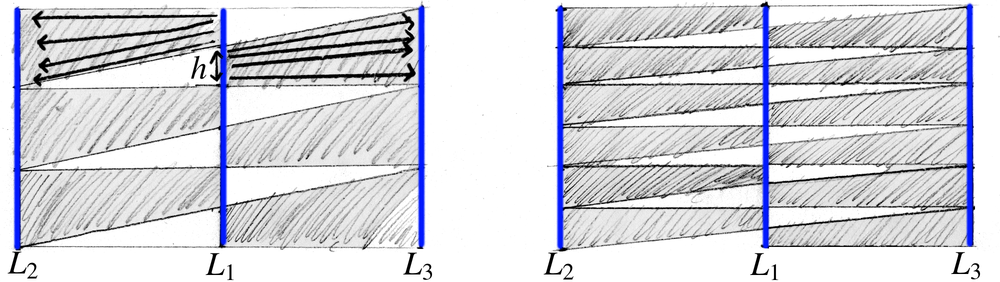}
      }
      \caption{A classical example of the existence problem: there is no optimal transport between
        a segment $L_1$ and two parallel segments $L_2$ and $L_3$
      (it is always possible to find a better transport by replacing $h$ with $h/2$).      
      }
      \label{fig:monge_pb}
\end{figure}  

\begin{figure*}
\includegraphics[width=\textwidth]{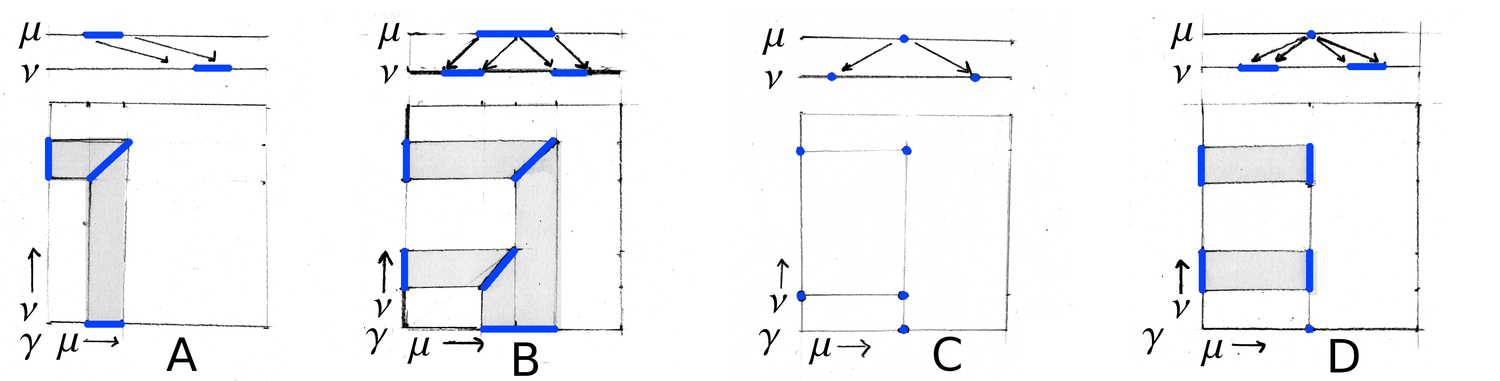}
\caption{
  Four example of transport plans in 1D. A: a segment is
  translated. B: a segment is split into two segments.
  C: a Dirac mass is split into two Dirac masses; D: a Dirac
  mass is spread along two segments. The first two examples
  (A and B) have the form $(Id \times T)\sharp \mu$ where $T$ is a transport
  map. The third and fourth ones (C and D) have no corresponding transport map,
  because each of them splits a Dirac mass.} 
\label{fig:ch5:ot:OTplans}
\end{figure*}

Before then, let us get back to examine the original problem. The local mass conservation
constraint is not the only difficulty: the functional optimized by Monge's problem is
non-symmetric, and this causes additional difficulties when studying the existence
of solutions for problem \eqref{eqn:monge}. The problem is not symmetric because $T$ needs to
be a map, therefore the source and target landscape do not play the same role.
Thus, it is possible to merge earth (if $T(x_1) = T(x_2)$ for two different points
$x_1$ and $x_2$), but it is not possible to split earth (for that, we would need a
\entreguillemets{map} $T$ that could send the same point $x$ to two different points $y_1$ and $y_2$).
The problem is illustrated in Figure \ref{fig:monge_pb}:
suppose that you want to compute the optimal transport between a segment
$L_1$ (that symbolizes a \entreguillemets{wall of earth}) and two parallel
segments $L_2$ and $L_3$ (that symbolize two \entreguillemets{trenches} with a depth that
correspond to half the height of the wall of earth). Now we want to transport the wall of earth
to the trenches, to make the landscape flat. To do so, it is possible to decompose $L_1$ into
segments of length $h$, sent alternatively towards $L_2$ and $L_3$
(Figure \ref{fig:monge_pb} on the left). For any length
$h$, it is always possible to find a better map $T$, that is a lower value
of the functional in \eqref{eqn:monge}, by subdividing $L_1$ into
smaller segments (Figure \ref{fig:monge_pb} on the right). The
best way to proceed consists in sending from each point of
$L_1$ \emph{half} the earth to $L_2$ and \emph{half} the earth to $L_3$, which
cannot be represented by a map. Thus, the best solution of problem \eqref{eqn:monge} is not a
map. In a more general setting, this problem appears each time the source measure
$\mu$ has mass concentrated on a manifold of dimension $d-1$ \cite{McCannEUMMP05}
(like the segment $L_1$ in the present example).

\section{Kantorovich's relaxed problem}

To overcome this difficulty, Kantorovich stated a problem with a
larger space of solutions, that is, a relaxation of problem \eqref{eqn:monge},
where mass can be both split and merged. The idea consists
in solving for the \entreguillemets{graph of $T$} instead of $T$.
One may think of the graph of $T$ as a function $g$ defined on $X \times Y$
that indicates for each couple of points $x\in X, y\in Y$ how much matter goes from
$x$ to $y$. However, once again, we cannot use standard functions to represent the
graph of $T$: if you think about the graph of a univariate function $x \mapsto f(x)$,
it is defined on $\mathbb{R}^2$ but \emph{concentrated} on a curve. For this reason, as in our
previous example with factories, one needs to use measures. Thus, we are now 
looking for a measure $\gamma$ supported by the
product space $X \times Y$. The relaxed problem is stated as follows:

\begin{equation}
   \begin{array}{l}
     \inf\limits_{\gamma} \left\{ \int\limits_{X \times Y} c(x,y) d \gamma \ | \ \gamma \ge 0 \mbox{ and } \gamma \in \Pi(\mu,\nu) \right\} \\[5mm]
   \mbox{where: } \\[3mm]
     \Pi(\mu,\nu) = \{ \gamma \in {\cal P}(X \times Y) \ | \
     (P_X)\sharp\gamma = \mu \ ; \ (P_Y)\sharp\gamma = \nu \}
   \end{array}
   \label{eqn:K} \tag{K}
\end{equation}
where $(P_X)$ and $(P_Y)$ denote the two projections $(x,y) \in X \times Y \mapsto x$ and
$(x,y) \in X \times Y \mapsto y$ respectively.

The two measures $(P_X)\sharp\gamma$ and $(P_Y)\sharp\gamma$ obtained by pushing forward $\gamma$ by the two projections
are called the \emph{marginals} of $\gamma$. The measures $\gamma$ in the admissible set $\Pi(\mu,\nu)$,
that is, the measures that have $\mu$ and $\nu$ as marginals, are called \emph{optimal transport plans}. Let us now have a
closer look at the two constraints on the marginals $(P_X)\sharp\gamma = \mu$ and $(P_X)\sharp\gamma$
that define the set of optimal transport plans $\Pi(\mu,\nu)$. Recalling the definition of the pushforward (previous subsection),
these two constraints can also be written as:
\begin{equation}
   \begin{array}{lcl}
       (P_X)\sharp\gamma = \mu & \iff & \forall B \subset X, \int_B d\mu = \int_{B \times Y} d\gamma \\[2mm]
       (P_Y)\sharp\gamma = \nu & \iff & \forall B^\prime \subset Y, \int_{B^\prime} d\nu = \int_{X \times B^\prime} d\gamma.       
   \end{array}
\label{eqn:ch5:ot:marginales}   
\end{equation}
Intuitively, the first constraint $(P_X)\sharp\gamma = \mu$ means that everything that comes from a subset $B$
of $X$ should correspond to the amount of matter (initially) contained by $B$ in the source landscape, and the second one
$(P_Y)\sharp\gamma = \nu$ means that everything that goes into a subset $B^{\prime}$ of $Y$ should correspond to the
(prescribed) amount of matter contained by $B^{\prime}$ in the target landscape $\nu$.

\begin{figure*}
      \centerline{
         \includegraphics[width=\textwidth]{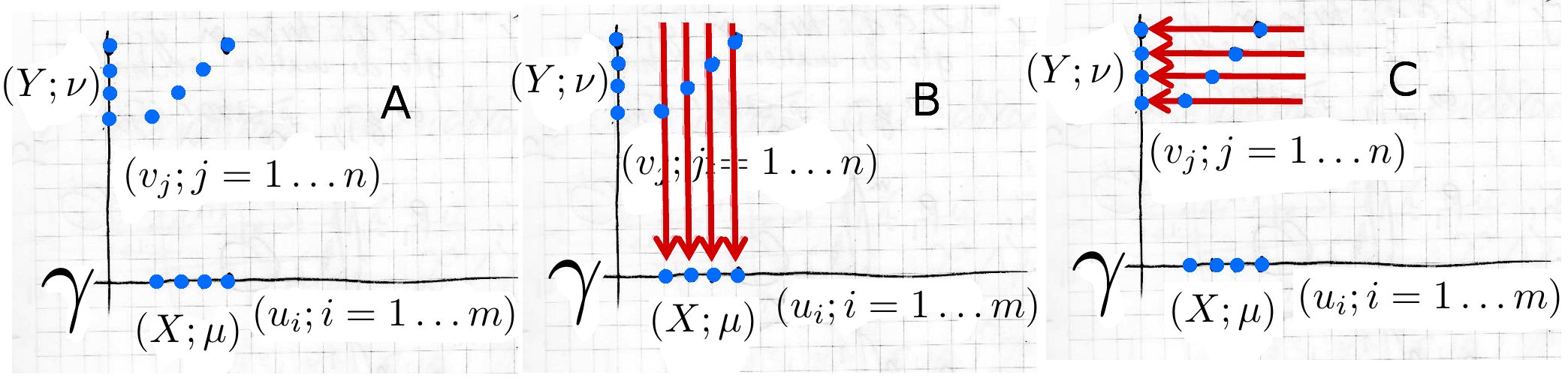}
      }
      \caption{A discrete version of Kantorovich's problem.}
      \label{fig:DK}
\end{figure*}  

We now examine the relation between the relaxed problem \eqref{eqn:K}
and the initial problem \eqref{eqn:monge}. One can easily check that among the optimal
transport plans, those with the form $(Id \times T)\sharp \mu$
correspond to a transport map $T$:

\begin{observation}
  If $(Id \times T)\sharp\mu$ is a transport plan, then $T$ pushes $\mu$ onto $\nu$.
\end{observation}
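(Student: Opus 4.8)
The plan is to reduce the claim to the functoriality (composition rule) of the pushforward. Writing $\gamma = (Id\times T)\sharp\mu$, the hypothesis that $\gamma$ is a transport plan means precisely that $\gamma \in \Pi(\mu,\nu)$, i.e.\ its two marginals are $\mu$ and $\nu$; in particular the second marginal condition reads $(P_Y)\sharp\gamma = \nu$. The goal $\nu = T\sharp\mu$ will then follow immediately once I identify $(P_Y)\sharp\gamma$ with $T\sharp\mu$.

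First I would establish the composition rule for pushforwards: for any (measurable) maps $f\colon A\to B$ and $g\colon B\to C$ and any measure $\lambda$ on $A$, one has $g\sharp(f\sharp\lambda) = (g\circ f)\sharp\lambda$. This is immediate from the definition of the pushforward recalled in the previous subsection, namely $T\sharp\mu(B)=\mu(T^{-1}(B))$: for every Borel set $E\subset C$,
\[
(g\sharp(f\sharp\lambda))(E) = (f\sharp\lambda)(g^{-1}(E)) = \lambda\big(f^{-1}(g^{-1}(E))\big) = \lambda\big((g\circ f)^{-1}(E)\big) = ((g\circ f)\sharp\lambda)(E),
\]
where I only used the defining relation twice together with the set identity $f^{-1}(g^{-1}(E)) = (g\circ f)^{-1}(E)$.

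Next I would apply this rule with $f = Id\times T\colon x\mapsto(x,T(x))$, with $g = P_Y\colon (x,y)\mapsto y$, and with $\lambda=\mu$, which gives $(P_Y)\sharp\gamma = (P_Y)\sharp\big((Id\times T)\sharp\mu\big) = (P_Y\circ(Id\times T))\sharp\mu$. The key observation is that $P_Y\circ(Id\times T)$ sends $x$ to $T(x)$, i.e.\ it is just $T$, so $(P_Y)\sharp\gamma = T\sharp\mu$. Combining this with the marginal condition $(P_Y)\sharp\gamma = \nu$ yields $\nu = T\sharp\mu$, which is exactly the statement that $T$ pushes $\mu$ onto $\nu$. (As a side remark, the same computation with $g=P_X$ shows $(P_X)\sharp\gamma = (Id)\sharp\mu = \mu$ automatically, so for plans of this special form the first marginal constraint is free and only the second one carries information.)

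I do not expect any real obstacle here: the only point to watch is measurability, namely that $Id\times T$ and $P_Y$ are Borel measurable so that the preimages of Borel sets and the resulting pushforwards are well defined. Once $T$ is assumed measurable — which is implicitly required for $(Id\times T)\sharp\mu$ to be a measure at all — the conclusion is a one-line consequence of the composition rule established above.
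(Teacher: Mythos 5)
Your proof is correct and follows essentially the same route as the paper: both use the second marginal condition $(P_Y)\sharp\gamma=\nu$ together with the composition rule $(P_Y)\sharp\bigl((Id\times T)\sharp\mu\bigr)=\bigl(P_Y\circ(Id\times T)\bigr)\sharp\mu=T\sharp\mu$. You merely spell out the composition rule and the measurability caveat explicitly, which the paper leaves implicit.
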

\begin{proof}
 $(Id \times T)\sharp\mu$ is in $\Pi(\mu,\nu)$, thus
  $(P_Y)\sharp(Id \times T)\sharp \mu = \nu$, or
  $\left((P_Y)\circ(Id \times T)\right)\sharp \mu = \nu$, and finally
  $T\sharp\mu = \nu$.
\end{proof}

We can now observe that if a transport plan $\gamma$ has the form $\gamma = (Id \times T)\sharp \mu$, then
problem \eqref{eqn:K} becomes:
$$
     \mbox{min} \left\{ \int\limits_{X \times Y} c(x,y)d\left( (Id \times T)\sharp \mu \right) \right\} 
\quad = \quad
     \mbox{min} \left\{ \int\limits_X c(x,T(x)) d \mu \right)
$$
(one retrieves the initial Monge problem). \\

To further help grasping an intuition of this notion of transport plan,
we show four 1D examples in Figure \ref{fig:ch5:ot:OTplans} (the transport
plan is then $1D \times 1D = 2D$). Intuitively, the transport plan $\gamma$
may be thought of as a \entreguillemets{table} indexed by  $x$ and $y$
that indicates the quantity of matter transported from $x$ to $y$.
More exactly\footnote{We recall that one cannot evaluate a measure $\gamma$
at a point $(x,y)$, we can just compute integrals with $\gamma$.},
the measure $\gamma$ is non-zero on subsets of $X \times Y$ that contain
points $(x,y)$ such that some matter is transported from $x$ to $y$. Whenever
$\gamma$ derives from a transport map $T$, that is if $\gamma$ has the form
$(Id \times T)\sharp \mu$, then we can consider $\gamma$ as the
\entreguillemets{graph of $T$} like in the first two examples of Figure \ref{fig:ch5:ot:OTplans} (A) and (B)\footnote{Note that
  the measure $\mu$ is supposed to be absolutely continuous with respect to the
  Lebesgue measure. This is required, because for instance in example (B) of Figure \ref{fig:ch5:ot:OTplans},
  the transport map $T$ is undefined at the center of the segment. The absolute continuity
  requirement allows one to remove from $X$ any subset with zero measure.} in Figure \ref{fig:ch5:ot:OTplans}.

The transport plans of the two other examples (C) and (D) have no
associated transport map, because they split Dirac masses.
The transport plan associated with Figure \ref{fig:monge_pb}
has the same nature (but this time in $2D \times 2D = 4D$). It cannot be written
with the form $(Id \times T)\sharp \mu$ because it splits the mass concentrated in $L_1$
into $L_2$ and $L_3$. 

Now the theoretical questions are:
\begin{itemize}
\item when does an optimal transport plan exist ?
\item when does it admit an associated optimal transport map ?
\end{itemize}

A standard approach to tackle this type of
existence problem is to find a certain \emph{regularity} both in the functional
and in the space of the admissible transport plans, that is, proving that the
functional is sufficiently \entreguillemets{smooth} and finding a compact set
of admissible transport plans. Since the set of admissible transport plans
contains at least the product measure $\mu \otimes \nu$, it is non-empty, and
existence can be proven thanks to a topological argument that exploits the
regularity of the functional and the compactness of the set. Once the existence
of a transport plan is established, the other question concerns the existence of
an associated transport map. Unfortunately, problem \eqref{eqn:K} does not directly show the
structure required by this reasoning path. However, one can observe that \eqref{eqn:K} is a
\emph{linear} optimization problem subject to \emph{linear} constraints. This suggests
using certain tools, such as the dual formulation, that was also developed by Kantorovich.
With this dual formulation, it is possible to exhibit an interesting structure of the
problem, that can be used to answer both questions (existence of a transport plan,
and existence of an associated transport map).

\section{Kantorovich's dual problem}
\label{ch5:sec:ot:KD}

Kantorovich's duality applies to problem \eqref{eqn:K}, in its most
general form (with measures). To facilitate understanding, we will
consider instead a discrete version of problem \eqref{eqn:K}, where the involved
entities are vectors and matrices (instead of measures and operators).
This makes it easy to better discern the structure of \eqref{eqn:K}, that is
the linear nature of both the functional and the constraints. This
also makes it easier for the reader to understand how to construct
the dual by manipulating simpler objects (matrices and vectors), however
the structure of the reasoning is the same in the general case.

\subsection{The discrete Kantorovich problem}
In Figure \ref{fig:DK}, we show a discrete version of the 1D transport between
two segments of Figure \ref{fig:ch5:ot:OTplans}.
The measures $\mu$ and $\nu$
are replaced with vectors $U = (u_i)_{i=1\ldots m}$ and $V = (v_j)_{i=1\ldots n}$.
The transport plan $\gamma$ becomes a set of coefficients $\gamma_{ij}$. Each 
coefficient $\gamma_{ij}$ indicates the quantity of matter  that will be transported
from $u_i$ to $v_j$.
The discrete Kantorovich problem can be written as follows:
\begin{equation}
\quad \quad
   \min\limits_\gamma <C,\gamma> \mbox{ subject to }
      \left\{
       \begin{array}{l}
         {\bf P_x} \gamma = U \\
         {\bf P_y} \gamma = V \\
         \gamma_{i,j} \ge 0 \quad \forall i,j
       \end{array}  
      \right.
\label{chap5:eqn:ot:primal}      
\end{equation}
where $\gamma$ is the vector of $\mathbb{R}^{m \times n}$ with all
coefficients $\gamma_{ij}$ (that is, the matrix $\gamma_{ij}$ \entreguillemets{unrolled}
into a vector), and $C$ the vector of $\mathbb{R}^{m \times n}$ with the coefficients $c_{ij}$
indicating the transport cost between point $i$ and point $j$ (for instance, the
Euclidean cost). The objective function is simply the dot product, denoted by $<C,\gamma>$,
of the cost vector $C$ and the vector $\gamma$. The objective function is \textbf{linear in $\gamma$}.
The constraints on the marginals \eqref{eqn:ch5:ot:marginales} impose in this discrete version
that the sums of the $\gamma_{ij}$ coefficients over the columns correspond to the $u_i$ coefficients (Figure \ref{fig:DK}-B)
and the sums over the rows correspond to the $v_j$ coefficients (Figure \ref{fig:DK}-C).
Intuitively, everything that leaves point $i$ should correspond to $u_i$, that is the quantity
of matter initially present in $i$ in the source landscape, and everything that arrives at a point 
$j$ should correspond to $v_j$, that is the quantity of matter desired at $j$ in
the target landscape. As one can easily notice, in this form, both constraints are
\textbf{linear in $\gamma$}. They can be written with two matrices ${\bf P_x}$ and ${\bf P_y}$,
of dimensions $m \times mn$ and $n \times mn$ respectively. 

\subsection{Constructing the Kantorovich dual in the discrete setting}

We introduce, arbitrarily for now, the following function $L$ defined by:
$$
    L(\varphi, \psi) = <C,\gamma> - <\varphi, {\bf P_x} \gamma - U> - <\psi, {\bf P_y} \gamma - V>
$$
that takes as arguments two vectors, $\varphi$ in ${\mathbb R}^m$ and $\psi$ in
${\mathbb R}^n$. The function $L$ is constructed from the objective function $<C,\gamma>$ from which
we subtracted the dot products of $\varphi$ and $\psi$ with the vectors that correspond to the
degree of violation of the constraints. One can observe that:
$$
\begin{array}{lcl}
  \sup\limits_{\varphi,\psi}[ L(\varphi,\psi) ] & = & <C,\gamma> \mbox{ if } {\bf P_x} \gamma = U \mbox{ and } {\bf P_y} \gamma = V \\
                                            & = & +\infty \mbox{ otherwise.}
\end{array}
$$
Indeed, if for instance a component $i$ of ${\bf P_x}\gamma$ is non-zero, one can make $L$ arbitrarily large
by suitably choosing the associated coefficient $\varphi_i$.

Now we consider:
$$
  \inf\limits_{\gamma \ge 0}\left[ \sup\limits_{\varphi, \psi} [L(\varphi, \psi)] \right] =
  \inf\limits_{\begin{tiny}\begin{array}{l} \gamma \ge 0 \\ {\bf P_x} \gamma = U \\ {\bf P_y} \gamma = V \end{array}\end{tiny}} \left[ <C, \gamma>  \right].
$$
There is equality, because to minimize $\sup[ L(\varphi, \psi) ]$, $\gamma$ has no other choice than
satisfying the constraints (see the previous observation). Thus, we obtain a new expression (left-hand side)
of the discrete Kantorovich problem (right-hand side). We now further examine it, and replace $L$ by its expression:
\begin{eqnarray}
\vspace{-5mm}\label{chap5:eqn:ot:dual1}
&  &\hspace{-7mm} \inf\limits_{\gamma \ge 0} \left[ \sup\limits_{\varphi, \psi} 
    \left(\begin{array}{ll}
      <C,\gamma> & - <\varphi, {\bf P_x} \gamma - U> \\
                 & - <\psi, {\bf P_y} \gamma - V> 
    \end{array}\right)
  \right]  \\[3mm]
\label{chap5:eqn:ot:dual2} 
& = & \sup\limits_{\varphi, \psi} \left[ \inf\limits_{\gamma \ge 0}
  \left(\begin{array}{ll}
  <C,\gamma> & - <\varphi, {\bf P_x} \gamma - U> \\
             & - <\psi, {\bf P_y} \gamma - V> 
  \end{array}\right)
 \right] \\[3mm]
\label{chap5:eqn:ot:dual3} 
& = & \sup\limits_{\varphi, \psi} \left[ \inf\limits_{\gamma \ge 0}
  \left(
    \begin{array}{l}
      <\gamma, C-{\bf P_x}^t \varphi - {\bf P_y}^t \psi> + \\[1mm]
      <\varphi, U> + <\psi, V>
    \end{array} 
  \right)
  \right] \\[3mm]
\label{chap5:eqn:ot:dual4} 
& = & \sup\limits_{\begin{tiny} \begin{array}{c} \varphi, \psi \\ {\bf P_x}^t \varphi + {\bf P_y}^t \psi \le C\end{array}\end{tiny}} \left[ <\varphi,U> + <\psi,V> \right]. \quad \quad 
\end{eqnarray}

The first step \eqref{chap5:eqn:ot:dual2} consists in exchanging the \entreguillemets{$\inf$} and \entreguillemets{$\sup$}.
Then we rearrange the terms \eqref{chap5:eqn:ot:dual3}.
By reinterpreting this equation as a constrained optimization problem (similarly to what we did in the previous paragraph), we finally
obtain the constrained optimization problem in \eqref{chap5:eqn:ot:dual4}. In the constraint ${\bf P_x}^t \varphi + {\bf P_y}^t \psi \le C$, the inequality
is to be considered componentwise. Finally, the problem \eqref{chap5:eqn:ot:dual4} can be rewritten as:

\begin{equation}
\begin{array}{l}
   \sup\limits_{\varphi, \psi} \left[ <\varphi, U> + <\psi, V>  \right] \\[2mm]
      \mbox{ subject to } \varphi_i + \psi_j \le c_{ij}, \quad \forall i,j.
\end{array}   
\end{equation}

As compared to the primal problem \eqref{chap5:eqn:ot:primal} that depends on $m \times n$ variables (all the coefficients
$\gamma_{ij}$ of the optimal transport plan for all couples of points $(i,j)$), this dual problem depends on
$m + n$ variables (the components $\varphi_i$ and $\psi_j$ attached to the source points and target points).
We will see later how to further reduce the number of variables, but before then,
we go back to the general continuous setting (that is, with functions, measures and operators).
    
\subsection{The Kantorovich dual in the continuous setting}
The same reasoning path can be applied to the continuous Kantorovich problem
\eqref{eqn:K}, leading to the following problem (DK):

\begin{equation}
   \begin{array}{l}
   (DK) \quad \quad \sup\limits_{\varphi,\psi} \left[ \int\limits_X \varphi d\mu + \int\limits_Y \psi d\nu \right ] \\[4mm]
   \mbox{ subject to: } \\
     \varphi(x) + \psi(y) \le c(x,y) \quad \forall (x,y) \in X \times Y,
  \end{array}
  \label{eqn:DK}  
\end{equation} where $\varphi$ and $\psi$ are now functions defined on $X$ and $Y$\footnote{The functions
$\varphi$ and $\psi$ need to be taken in $L^1(\mu)$ and $L^1(\nu)$. The proof of the equivalence with problem \eqref{eqn:K}
requires more precautions than in the discrete case, in particular step \eqref{chap5:eqn:ot:dual2}
(exchanging $\mbox{sup}$ and $\mbox{inf}$), that uses a result of convex analysis (due to Rockafellar),
see \cite{OTON} chapter 5.}. \\

The classical image that gives an intuitive meaning to this dual problem is to consider that instead of
transporting earth by ourselves, we are now hiring a company that will do the work on our behalf. The
company has a special way of determining the price: the function $\varphi(x)$ corresponds to what they charge for
loading earth at $x$, and $\psi(y)$ corresponds to what they charge for unloading earth at $y$.
The company aims at maximizing its profit (this is why the dual problem is a \entreguillemets{$\mbox{sup}$} rather
than an \entreguillemets{$\mbox{inf}$)},
but it cannot charge more than what it would cost us if we were doing the work by ourselves (hence the constraint). \\

The existence of solutions for (DK) remains difficult to study, because the set of functions $\varphi,\psi$ that
satisfy the constraint is not compact. However, it is possible to reveal more structure of the problem,
by introducing the notion of \emph{c-transform}, that makes it possible to exhibit a set
of admissible functions with sufficient regularity:

\begin{definition}
\begin{itemize}

\item For $f$  any function on $Y$ with values in $\mathbb{R} \cup \{ - \infty \}$ and not identically $-\infty$, we define its $c$-transform by 
\[
f^c(x)=\inf\limits_{y \in Y}\left[ c(x,y)-f(y) \right], \quad x \in X.
\]

  \item If a function $\varphi$ is such that there exists a function $f$ such that $\varphi = f^c$, 
     then $\varphi$ is said to be \emph{$c$-concave};
   \item ${\bf \Psi}_c(X)$ denotes the set of c-concave functions on $X$.
\end{itemize}
\end{definition}


We now show two properties of (DK) that will allow us to restrict the problem to
the class of $c$-concave functions to search for $\varphi$ and $\psi$:

\begin{observation}
  If the pair $(\varphi,\psi)$ is admissible for (DK), then the pair $(\psi^c,\psi)$ is admissible as well.
\end{observation}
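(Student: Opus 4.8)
The plan is to verify the single inequality that defines admissibility for (DK), namely $\psi^c(x) + \psi(y) \le c(x,y)$ for every $(x,y) \in X \times Y$, and to observe that it follows immediately from the definition of the $c$-transform. First I would recall that, by definition,
\[
\psi^c(x) = \inf_{y' \in Y}\left[ c(x,y') - \psi(y') \right],
\]
so that for any fixed $y \in Y$ the infimum is bounded above by the particular value taken at $y' = y$:
\[
\psi^c(x) \le c(x,y) - \psi(y).
\]
Rearranging gives $\psi^c(x) + \psi(y) \le c(x,y)$, which is exactly the constraint appearing in \eqref{eqn:DK}. Notice that this step does not even use the hypothesis that $(\varphi,\psi)$ is admissible; it holds for the $c$-transform of any function $\psi$ that is not identically $-\infty$.

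The role of the hypothesis is only to guarantee that $\psi^c$ is a legitimate admissible partner, i.e.\ that it is not identically $-\infty$ and pairs sensibly with $\psi$. Since $(\varphi,\psi)$ is admissible, we have $\varphi(x) + \psi(y) \le c(x,y)$, that is $\varphi(x) \le c(x,y) - \psi(y)$ for all $y \in Y$; taking the infimum over $y$ yields
\[
\varphi(x) \le \inf_{y \in Y}\left[ c(x,y) - \psi(y) \right] = \psi^c(x).
\]
Thus $\psi^c$ dominates $\varphi$ pointwise, and in particular $\psi^c$ is not identically $-\infty$, so the $c$-transform is well defined and the pair $(\psi^c,\psi)$ is a bona fide candidate for (DK).

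There is no real obstacle here: the statement is essentially a restatement of the defining property of the infimum, and the only point requiring a word of care is integrability, so that the objective $\int_X \psi^c\,d\mu$ in \eqref{eqn:DK} is meaningful. This is handled by the sandwich $\varphi(x) \le \psi^c(x) \le c(x,y_0) - \psi(y_0)$ obtained by fixing any $y_0 \in Y$, which bounds $\psi^c$ between admissible quantities. It is also worth recording the by-product $\psi^c \ge \varphi$, since it shows that replacing $\varphi$ by $\psi^c$ can only \emph{increase} the objective $\int_X \varphi\,d\mu$; hence nothing is lost, and in fact something may be gained, by restricting the search for the optimal first potential to $c$-transforms. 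This is precisely what makes the observation useful for reducing (DK) to the class of $c$-concave functions.
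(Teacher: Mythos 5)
Your proof is correct and uses the same argument as the paper: bound the infimum defining $\psi^c(x)$ by its value at $y'=y$ to get $\psi^c(x) + \psi(y) \le c(x,y)$. The additional remarks about $\psi^c \ge \varphi$ anticipate the paper's next observation but do not change the core argument, which matches the paper's proof exactly.
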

\begin{proof}
$$
\begin{array}{l}
    \left\{
        \begin{array}{l}
           \forall(x,y) \in X \times Y, \varphi(x) + \psi(y) \le c(x,y) \\
           \psi^c(x) = \inf\limits_{y \in Y} \left[ c(x,y) - \psi(y) \right]
        \end{array}
    \right. \\[3mm]
    \begin{array}{lcl}
        \psi^c(x) + \psi(y) & = & \inf\limits_{y^\prime \in Y}\left[  c(x,y^\prime) - \psi(y^\prime)  \right] + \psi(y)\\
                            & \le & (c(x,y) - \psi(y))+\psi(y) \\
                            & \le & c(x,y).
    \end{array}
\end{array}
$$
\end{proof}

\begin{observation}
   If the pair $(\varphi,\psi)$ is admissible for (DK), then one obtains a \emph{better pair} by replacing $\varphi$ with $\psi^c$:
\end{observation}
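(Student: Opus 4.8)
The plan is to read \entreguillemets{better pair} in the sense of the objective of the sup-problem (DK): since (DK) \emph{maximizes} $\int_X \varphi\, d\mu + \int_Y \psi\, d\nu$ over admissible pairs, replacing $\varphi$ by $\psi^c$ while keeping $\psi$ fixed produces a better (or at least no-worse) pair precisely when $\int_X \psi^c\, d\mu \ge \int_X \varphi\, d\mu$. The term $\int_Y \psi\, d\nu$ is left untouched by this substitution, so only this single inequality on the $X$-integral needs to be established. Moreover, admissibility of the new pair $(\psi^c,\psi)$ has already been granted by the previous observation, so here I only have to compare the two objective values, not re-check feasibility.

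First I would prove the pointwise inequality $\varphi(x) \le \psi^c(x)$ for every $x \in X$. Admissibility of $(\varphi,\psi)$ gives $\varphi(x) + \psi(y) \le c(x,y)$ for all $(x,y) \in X \times Y$, which rearranges to $\varphi(x) \le c(x,y) - \psi(y)$ for every $y \in Y$. Since this bound holds uniformly in $y$, the number $\varphi(x)$ is a lower bound for the quantity $c(x,y) - \psi(y)$, hence also a lower bound for its infimum over $y$; by the very definition of the $c$-transform, that infimum is exactly $\psi^c(x)$. Therefore $\varphi(x) \le \psi^c(x)$ holds at every point $x$.

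Finally, integrating this pointwise inequality against the nonnegative measure $\mu$ (which preserves the direction of the inequality) yields $\int_X \varphi\, d\mu \le \int_X \psi^c\, d\mu$. Adding the common term $\int_Y \psi\, d\nu$ to both sides then shows that the objective value of (DK) at $(\psi^c,\psi)$ dominates its value at $(\varphi,\psi)$, which is exactly the claim.

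I do not expect a genuine obstacle in this argument: it is essentially a one-line application of the principle that the infimum of a family of upper bounds is still an upper bound. The only point deserving a little care is bookkeeping of the directions of the inequalities — in particular remembering that \entreguillemets{better} here means \emph{larger}, because (DK) is a supremum, and that passing to the infimum in $\psi^c(x) = \inf_{y}\left[c(x,y)-\psi(y)\right]$ preserves $\varphi(x) \le \psi^c(x)$ rather than reversing it. (At the fully rigorous level one should also note that $\psi^c$ lies in $L^1(\mu)$ so that the integrals make sense, but this is exactly the kind of regularity that the $c$-transform is introduced to provide.)
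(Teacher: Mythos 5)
Your proof is correct and follows essentially the same route as the paper: use admissibility to bound $\varphi(x)$ above by $c(x,y)-\psi(y)$ uniformly in $y$, pass to the infimum defining $\psi^c$, and integrate. Note that the paper's own displayed conclusion reads $\psi^c(x) \le \varphi(x)$, which is a typo for the inequality you correctly derive, $\varphi(x) \le \psi^c(x)$ --- the direction needed for the new pair to be \emph{better} in a maximization problem.
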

\begin{proof}
$$
\left.
   \begin{array}{lcl}
   & \psi^c(x)    =  & \inf\limits_{y \in Y} \left[ c(x,y) - \psi(y) \right] \\
       \forall y \in Y, \quad &\varphi(x)  \le & c(x,y) - \psi(y)
   \end{array}
\right\} 
   \Rightarrow
   \psi^c(x) \le \varphi(x).
$$
\end{proof}

In terms of the previous intuitive image, this means that by replacing $\varphi$ with $\psi^c$, the company can charge more while
the price remains acceptable for the client (that is, the constraint is satisfied). Thus, we have:
$$
\begin{array}{ll}
   \inf(K)  & = \sup\limits_{\varphi \in {\bf \Psi}_c(X)} \quad \int\limits_X \varphi \ d\mu + \int\limits_Y \varphi^c \ d\nu \\[4mm]
            & = \sup\limits_{\psi \in {\bf \Psi}_c(Y)} \quad  \int\limits_X \psi^c \ d\mu+ \int\limits_Y \psi \ d\nu 
\end{array}   
$$   

We do not give here the detailed proof for existence. The reader is 
referred to \cite{OTON}, Chapter 4. The idea is that we are now in
a much better situation, since the set of admissible functions
${\bf \Psi}_c(X)$ is compact\footnote{Provided that the value of
$\psi$ is fixed at a point of $Y$ in order to suppress invariance
with respect to adding a constant to $\psi$.}. \\

The optimal value gives an interesting information, that is the minimum cost of transforming $\mu$ into $\nu$.
This can be used to define a distance between distributions, and also gives a way to compare different distributions,
which is of practical interest for some applications. 

\section{From the Kantorovich dual to the optimal transport map}

\subsection{The $c$-superdifferential}

Suppose now that in addition to the optimal cost you want to know the associated way to transform $\mu$ into $\nu$,
in other words, when it exists, the map $T$ from $X$ to $Y$ which associated transport plan $(Id \times T)\sharp\mu$ minimizes
the functional of the Monge problem. A result characterizes
the support of $\gamma$, that is the subset $\partial^{c} \varphi \subset X \times Y$ of the pairs of points $(x,y)$ connected by the transport plan:


\begin{theorem}
Let $\varphi$ a $c$-concave function. For all $(x,y) \in \partial^{c}\varphi$, we have
$$
\nabla \varphi(x) - \nabla_x c(x,y) = 0,
$$
where $ \partial^{c} \varphi = \{ (x,y) \ |  \ \varphi(z)\leq  \varphi(x)  +(c(z,y)-c(x,y)), \forall z \in X \}\footnote{By definition of the $c$-transform,  if $(x,y)\in \partial^{c} \varphi$, then $\varphi^c(y) = c(x,y)-\varphi(x)$. Then, the $c$-superdifferential can be characterized by the set of all points $(x,y)\in X\times Y$ such that
$ \varphi(x)+\varphi^c(y) =c(x,y)$.}$ denotes the so-called \emph{$c$-superdifferential} of $\varphi$.
\label{thm:MongeSol}
\end{theorem}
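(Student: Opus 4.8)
The plan is to recognize the condition defining $\partial^c\varphi$ as a first-order optimality condition in disguise. Fix a pair $(x,y) \in \partial^c\varphi$. By the very definition given in the statement, for every $z \in X$ we have $\varphi(z) \le \varphi(x) + c(z,y) - c(x,y)$, which I would immediately rearrange into
\[
\varphi(z) - c(z,y) \;\le\; \varphi(x) - c(x,y), \qquad \forall z \in X.
\]
In other words, the auxiliary function $g_y : z \mapsto \varphi(z) - c(z,y)$ attains its \emph{maximum} over $X$ precisely at the point $z = x$. This is the geometric content of the statement: belonging to the $c$-superdifferential at $(x,y)$ means exactly that $x$ solves the optimization problem $\max_{z}\, \big(\varphi(z) - c(z,y)\big)$. (Equivalently, one could start from the footnote characterization $\varphi(x) + \varphi^c(y) = c(x,y)$ together with $\varphi(z) + \varphi^c(y) \le c(z,y)$, which yields the same inequality.)

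Next I would invoke the standard first-order necessary condition for an interior maximum. Assuming $x$ lies in the interior of $X$ and that both $\varphi$ and $c(\cdot,y)$ are differentiable at $x$, the gradient of $g_y$ must vanish at the maximizer. The one-line justification is that for any direction $e$ and small $t$, the inequality $g_y(x+te) \le g_y(x)$ forces the directional derivative to be $\le 0$; applying this to $-e$ as well gives $= 0$ in every direction, hence
\[
\nabla g_y(x) \;=\; \nabla \varphi(x) - \nabla_x c(x,y) \;=\; 0,
\]
which is exactly the claimed identity.

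The main obstacle is not computational but is entirely a regularity matter: one must justify that $\varphi$ and $c(\cdot,y)$ are differentiable at the relevant point $x$. Differentiability of $c$ in its first argument is a smoothness hypothesis on the cost (satisfied, for instance, by the quadratic cost). For $\varphi$ the key structural observation is that, being $c$-concave, it is by definition an infimum $\varphi = f^c$ of the family $x \mapsto c(x,y) - f(y)$; when $c$ is smooth this makes $\varphi$ locally Lipschitz and indeed semiconcave, so by Rademacher's theorem it is differentiable almost everywhere. Consequently the identity holds $\mu$-almost everywhere, which is all that is needed to reconstruct the optimal transport map from $\nabla\varphi$ and $\nabla_x c$. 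I would finally remark on the boundary case: if $x \in \partial X$ the vanishing gradient must be weakened to a one-sided (inward-pointing) condition, but since $\mu$ is absolutely continuous the almost-everywhere interior statement suffices for the application.
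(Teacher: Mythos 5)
Your proposal is correct and follows essentially the same route as the paper: the paper's formal proof is a citation to Villani (chapters 9 and 10), but the heuristic argument it gives immediately afterwards is exactly your rearrangement of the superdifferential inequality into $c(x,y)-\varphi(x)\le c(z,y)-\varphi(z)$ followed by two-sided directional derivatives at the extremizer $z=x$. The only difference is that you additionally spell out the differentiability issues (semiconcavity of $c$-concave functions, Rademacher, interiority) that the paper delegates to the reference, which is a welcome addition rather than a deviation.
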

\begin{proof}
See \cite{OTON} chapters 9 and 10.
\end{proof}

In order to give an idea of the relation between the $c$-superdifferential and the associated transport map $T$, we present below a
heuristic argument:
consider a point $(x,y)$ in the $c$-superdifferential $\partial^{c} \varphi$, then for all $ z \in X$ we have
\begin{equation}
c(x,y)-\varphi(x) \leq c(z,y)-\varphi(z) .
\label{eqn:ssdiff1}
\end{equation}

Now, by using \eqref{eqn:ssdiff1}, we can compute the derivative at $x$ with respect to an arbitrary direction $w$
\[
\lim_{t\to 0^+}\frac{\varphi(x + t w)-\varphi(x)}{t}\leq \lim_{t\to 0^+}\frac{c(x + t w,y)-c(x,y)}{t}
\]
and we obtain $\nabla \varphi(x) \cdot w \le \nabla_x c(x,y) \cdot w$. We can do the same derivation along direction $-w$
instead of $w$, and then we get $\nabla \varphi(x) \cdot w = \nabla_x c(x,y) \cdot w$, $\forall w\in X$. 
%


In the particular case of the $L_2$ cost, that is with $c(x,y) = 1/2 \| x - y \|^2$,
this relation becomes $\forall
(x,y) \in \partial^{c} \varphi, \nabla \varphi(x) + y - x = 0$, thus, when
the optimal transport map $T$ exists, it is given by
\[
T(x) = x - \nabla \varphi(x) = \nabla (\|x\|^2/2 - \varphi(x)).
\] 
Not
only this gives an expression of $T$ in function of $\varphi$, which is
of high interest to us if we want to compute the transport explicitly.
In addition, this makes it possible to characterize $T$ as the gradient of a \emph{convex} function (see also
Brenier's polar factorization theorem \cite{BrenierPFMR91}). This
convexity property is interesting, because it means that
two \entreguillemets{transported particles} $x_1 \mapsto T(x_1)$ et
$x_2 \mapsto T(x_2)$ will never collide. We now see how to prove
these two assertions ($T$ gradient of a convex function
and absence of collision) in the case of the $L_2$ transport (with $(c(x,y) = 1/2 \| x - y \|^2$).



\begin{figure}
      \centerline{
         \includegraphics[width=50mm]{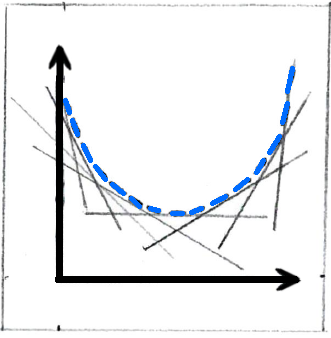}
      }
      \caption{
         The upper envelope of a family of affine functions is a convex function.
      }
      \label{fig:convex}
\end{figure}

\begin{figure*}
      \centerline{
         \includegraphics[width=\textwidth]{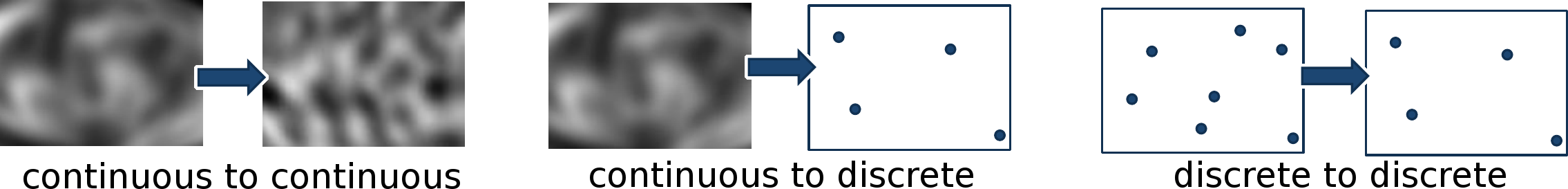}
      }
      \caption{
        Different types of transport, with continuous and discrete measures $\mu$ and $\nu$.
      }
      \label{fig:transport_types}
\end{figure*}

\begin{observation}
If $c(x,y)$ = $1/2 \| x - y \|^2$ and $\varphi \in {\bf \Psi}_c(X)$, then $\bar{\varphi}: x \mapsto \bar{\varphi}(x) = \| x \|^2/2 - \varphi(x)$
is a convex function (it is an equivalence if $X = Y = \mathbb{R}^d$, see \cite{Santambrogio}).
\label{obs:PhiConvexity}
\end{observation}
\begin{proof}
\[
\begin{split}
   \varphi(x)  = & \psi^c(x) \\
            = & \inf\limits_y \left[\frac{\|x - y\|^2}{2} - \psi(y)\right] \\
            = & \inf\limits_y \left[\frac{\|x\|^2}{2} - x \cdot y + \frac{\|y\|^2}{2} - \psi(y)\right].
\end{split}
\]
Then,
\[
\begin{split}
-\bar{\varphi}(x)  = & \varphi(x) - \frac{\|x\|^2}{2} = \inf\limits_y \left[ -x \cdot y + \left( \frac{\|y\|^2}{2} - \psi(y)  \right) \right].
\end{split}
\]
Or equivalently,
\[
\begin{split}
\bar{\varphi}(x) = & \sup\limits_y \left[ x \cdot y - \left( \frac{\|y\|^2}{2} - \psi(y) \right) \right].
\end{split}
\]
The function $ x \mapsto x \cdot y - \left( \frac{\|y\|^2}{2} - \psi(y) \right)$ is affine in $x$, therefore the graph of
$\bar{\varphi}$ is the upper envelope of a family of hyperplanes, therefore $\bar{\varphi}$ is a convex function (see Figure \ref{fig:convex}).
\end{proof}


\begin{observation}
We now consider the trajectories of two particles parameterized by $t \in [0,1]$, $t \mapsto (1-t)x_1 + t T(x_1)$ and
$t \mapsto (1-t)x_2 + t T(x_2)$. If $x_1 \neq x_2$ and $0 < t < 1$ then there is no collision between the two
particles.
\end{observation}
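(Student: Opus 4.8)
The plan is to argue by contradiction, reducing the geometric \emph{non-collision} statement to the monotonicity of the map $T$. Suppose the two trajectories meet at some time $t$ with $0 < t < 1$, i.e.
$$(1-t)x_1 + t\,T(x_1) = (1-t)x_2 + t\,T(x_2).$$
Rearranging the terms, I would first bring this to the equivalent form
$$(1-t)(x_1 - x_2) + t\big(T(x_1) - T(x_2)\big) = 0.$$

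The next step is to take the inner product of this identity with the vector $x_1 - x_2$, which is nonzero by hypothesis. This yields the scalar identity
$$(1-t)\,\|x_1 - x_2\|^2 + t\,\big\langle\, T(x_1) - T(x_2),\ x_1 - x_2 \,\big\rangle = 0.$$
The crucial ingredient is that, in the $L_2$ case, $T = \nabla \bar{\varphi}$ is the gradient of the \emph{convex} function $\bar{\varphi}$ of Observation \ref{obs:PhiConvexity}. Gradients of convex functions are monotone, so $\langle\, T(x_1) - T(x_2),\ x_1 - x_2 \,\rangle \ge 0$; this follows immediately by adding the two first-order convexity inequalities for $\bar{\varphi}$ written at $x_1$ and at $x_2$. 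Hence the second summand above is nonnegative, while the first, $(1-t)\|x_1-x_2\|^2$, is \emph{strictly positive} because $t < 1$ and $x_1 \neq x_2$. The left-hand side is therefore strictly positive, contradicting its being equal to $0$. This contradiction shows no collision can occur for $0 < t < 1$.

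The argument is short, so the only real obstacle is conceptual rather than computational: one must correctly identify $T$ as $\nabla\bar{\varphi}$ with $\bar{\varphi}$ convex (already supplied by the preceding observation) and invoke the monotonicity of the gradient. I would also be careful to flag where each hypothesis is used: the strict inequality $t < 1$ is what makes the quadratic term strictly positive, and $x_1 \neq x_2$ is what makes $\|x_1 - x_2\|^2 > 0$; dropping either hypothesis would allow the trajectories to coincide (at an endpoint, or trivially), so both are genuinely needed.
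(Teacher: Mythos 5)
Your proof is correct and follows essentially the same route as the paper's: contradiction, rearrangement, inner product with $x_1 - x_2$, and monotonicity of $\nabla\bar{\varphi}$. In fact your version is slightly more careful, since you only invoke the nonnegativity $\langle T(x_1)-T(x_2),\,x_1-x_2\rangle \ge 0$ (which follows from mere convexity of $\bar{\varphi}$) together with the strict positivity of $(1-t)\|x_1-x_2\|^2$, whereas the paper asserts both summands are strictly positive, which would require strict convexity.
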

\begin{proof}
  By contradiction, suppose there is a collision, that is there exists 
  $t \in (0,1)$ and $x_1 \neq x_2$ such that
\[  
 (1-t)x_1 + tT(x_1)  = (1-t)x_2 + tT(x_2).
 \]
Since $T=\nabla \bar{\varphi}$, we can rewrite the last equality as
\[ 
 (1-t)(x_1 - x_2)  + t (\nabla \bar{\varphi}(x_1) - \nabla \bar{\varphi}(x_2))=0.
\]
Therefore, 
 \[
   (1-t)\| x_1 - x_2 \|^2 + t(\nabla \bar{\varphi}(x_1) - \nabla \bar{\varphi}(x_2))\cdot (x_1 - x_2) = 0.
\]
  The last step leads to a contradiction, between the left-hand side is the sum of
two strictly positive numbers (recalling the definition of the convexity of   
$\bar{\varphi}$: 
   $\forall x_1 \neq x_2, (x_1-x_2) \cdot (\nabla \bar{\varphi}(x_1) - \nabla \bar{\varphi}(x_2)) > 0$
)\footnote{Note that even if there is no collision, the trajectories can cross, that is
  $(1-t)x_1 + tT(x_1)  = (1-t')x_2 + t'T(x_2)$ for some $t\neq t'$ (see example in \cite{OTON}).
  If the cost is the Euclidean distance (instead of squared Euclidean distance),
  the non-intersection property is stronger and trajectories cannot cross. This comes at the expense of losing the
  uniqueness of the optimal transport plan\cite{OTON}.} 
\end{proof}

\section{Continuous, discrete and semi-discrete transport}

The properties that we have presented in the previous sections are true for any couple of
source and target measures $\mu$ and $\nu$, that can derive from continuous functions or
that can be discrete empirical measures (sum of Dirac masses). Figure \ref{fig:transport_types} presents three
configurations that are interesting to study. These configurations have specific properties, that lead to
different algorithms for computing the transport. We give here some indications and references concerning
the \emph{continuous $\rightarrow$ continuous} and \emph{discrete $\rightarrow$ discrete} cases. Then we will
develop the \emph{continuous $\rightarrow$ discrete} case with more details in the next section. 

\subsection{The continuous $\rightarrow$ continuous case and Monge-Amp\`ere equation}
We recall that when the optimal transport map exists, in the case of the $L_2$ cost
(that is, $c(x,y) = 1/2\| x - y \|^2$), it can be deduced from the function $\varphi$ by using
the relation $T(x) = \nabla \bar{\varphi} = x - \nabla \varphi$. The change of variable formula for integration
over a subset $B$ of $X$ can be written as:
\begin{equation}
   \forall B \subset X, \int_B 1 d\mu = \mu(B) = \nu(T(B)) = \int_B \left| \det J_T(x) \right|  d\mu
   \label{eqn:changevar}
\end{equation}
where $J_T$ denotes the Jacobian matrix of $T$ and $\det$ denotes the determinant.

If $\mu$ and $\nu$ have densities $u$ and $v$ respectively, that is
$\forall B, \mu(B) = \int_B u(x)dx$ and $\nu(B) = \int_B v(x)dx$, then one can (formally) consider
\eqref{eqn:changevar} pointwise in $X$:
\begin{equation}
    \forall x \in X, \ u(x) = \left|\det J_T(x)\right| v(T(x)).
   \label{eqn:pointwise}
\end{equation}
By injecting $T=\nabla\bar{\varphi}$ and $J_T = H \bar{\varphi}$ into \eqref{eqn:pointwise}, one obtains:
\begin{equation}
\forall x \in X, \ u(x) = \left| \det H \bar{\varphi}(x) \right| v (\nabla \bar{\varphi}(x)),
\label{eqn:MAE}
\end{equation}
where $H\bar{\varphi}$ denotes the Hessian matrix of $\bar{\varphi}$. Equation
\eqref{eqn:MAE} is known ad the \emph{Monge-Amp\`ere equation}. It is a highly non-linear
equation, and its solutions when they exist often present singularities\footnote{
This is similar to the eikonal equation, which solution corresponds to the distance field,
that has a singularity on the medial axis.}. Note that the derivation above is purely formal,
and that studying the solutions of the Monge-Amp\`ere equation require using more sophisticated
tools. In particular, it is possible to define several types of weak solutions (viscosity solutions,
solution in the sense of Brenier, solutions in the sense of Alexandrov \ldots). Several algorithms
to compute numerical solutions of the Monge-Amp\`ere equations were proposed. As such, see for
instance the Benamou-Brenier algorithm \cite{ACFM:BB:2000}, that uses a dynamic formulation
inspired by fluid dynamics (incompressible Euler equation with specific boundary conditions).
See also \cite{papadakis:hal-00816211}. 

\subsection{The discrete $\rightarrow$ discrete case}
If $\mu$ is the sum of $m$ Dirac masses and $\nu$
the sum of $n$ Dirac masses, then the problem boils down to finding the $m \times n$ coefficients $\gamma_{ij}$
that give for each pair of points $i$ of the source space and $j$ of the target space
the quantity of matter transported from $i$ to $j$. This corresponds to the transport plan
in the discrete Kantorovich problem that we have seen previously \S\ref{ch5:sec:ot:KD}.
This type of problem (referred to as an \emph{assignment problem}) can be solved by
different methods of linear programming \cite{AP:BDM:2009}. These method can be dramatically
accelerated by adding a regularization term, that can be interpreted as the entropy of
the transport plan \cite{ChristianLeonardSchroedinger}. This regularized version of 
optimal transport can be solved by highly efficient numerical algorithms \cite{DBLP:conf/nips/Cuturi13}.

\subsection{The continuous $\rightarrow$ discrete case}
This configuration, also called \emph{semi-discrete}, corresponds to a continuous function transported to a sum of Dirac masses (see the examples of c-concave functions
in \cite{gangbo1996}). This correspond to our example with factories that consume a resource, in \S\ref{sect:monge_measure}.
Semi-discrete transport has interesting connections with some notions of
computational geometry and some specific sets of convex polyhedra that were studied by Alexandrov
\cite{Alexandrov48} and later by Aurenhammer, Hoffman and Aranov \cite{DBLP:conf/compgeom/AurenhammerHA92}.
The next section is devoted to this configuration.

\section{Semi-discrete transport}
\label{chap5:sec:ot:semidiscreteOT}

\begin{figure}
      \centerline{ \includegraphics[width=50mm]{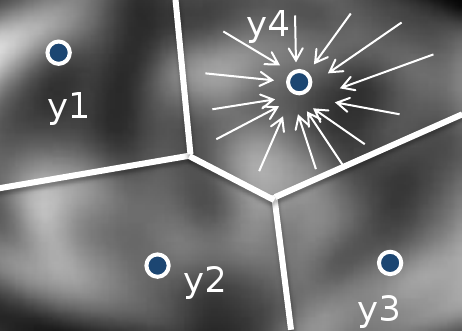}}

\caption{
  Semi-discrete transport: gray levels symbolize the quantity of a resource that will
  be transported to 4 factories. Each factory will be allocated a part of the terrain
  in function of the quantity of resource that it should collect.
}
\label{fig:semi_discrete}
\end{figure}

We now suppose that the source measure $\mu$ is continuous, and that the target measure
$\nu$ is a sum of Dirac masses. A practical example of this type of configuration corresponds
to a resource which available quantity is represented by a function $u$. The resource
is collected by a set of $n$ factories, as shown in Figure \ref{fig:semi_discrete}.
Each factory is supposed to collect a certain prescribed quantity of resource $\nu_j$. Clearly, the sum
of all prescriptions corresponds to the total quantity of available resource ($\sum_{j=1}^n \nu_j= \int_X u(x) dx$). \\

\subsection{Semi-discrete Monge problem}
In this specific configuration, the Monge problem becomes:

$$
  \begin{array}{l}
    \inf\limits_{T: X\rightarrow Y} \int\limits_X c(x,T(x)) u(x)dx,
     \mbox{ subject to} \int\limits_{T^{-1}(y_j)} u(x)dx = \nu_j,  \forall j.
    \end{array}
   \label{eqn:sdmonge} 
$$

 A transport map $T$ associates with each point $x$ of $X$ one of the points $y_j$. Thus, it is possible
to partition $X$, by associating to each $y_j$ the region $T^{-1}(y_j)$ that contains all the points transported
towards $y_j$ by $T$. The constraint imposes that the quantity of collected resource over each region $T^{-1}(y_j)$
corresponds to the prescribed quantity $\nu_j$. \\

Let us now examine the form of the dual Kantorovich problem.
In terms of measure, the source measure $\mu$ has a 
density $u$, and the target measure $\nu$ is a sum
of Dirac masses $\nu = \sum_{j=1}^n \nu_j \delta_{y_j}$, supported by the
set of points $Y = \left\{ y_j \right\}$. We recall that in its general
form, the dual Kantorovich problem is written as follows:
\begin{equation}
   \sup\limits_{\psi \in {\bf \Psi^c}(Y)} \left[ \int_X \psi^c(x)d\mu + \int_Y \psi(y)d\nu \right].
   \label{eqn:sdk1}   
\end{equation}

In our semi-discrete case, the functional becomes a function of $n$ variables, with the following form:
\begin{eqnarray}
   \label{eqn:sdk00}
     F(\psi) &=& \!F(\psi_1, \psi_2, \ldots \psi_n) \\
   \label{eqn:sdk2}   
    &=&\! \int\limits_X \! \psi^c(x) u(x) dx + \sum_{j=1}^n \psi_j \nu_j  \\
   \label{eqn:sdk3}
    &=&\! \int\limits_X \! \inf\limits_{y_j \in Y} \left[c(x,y_j) - \psi_j \right] u(x) dx + \sum_{j=1}^n \psi_j \nu_j \\
   \label{eqn:sdk4}
   &=& \! \sum\limits_{j=1}^n \int\limits_{\Lag_\psi^c(y_j)} \!\left( c(x, y_j) - \psi_j \right) u(x) dx + \sum_{j=1}^n \psi_j \nu_j. 
\end{eqnarray}

The first step \eqref{eqn:sdk2} takes into account the nature of the measures $\mu$ and $\nu$. 
In particular, one can notice that the measure $\nu$ is completely defined by the scalars $\nu_j$ associated
with the points $y_j$, and the function $\psi$ is defined by the scalars $\psi_j$ that correspond to its value
at each point $y_j$. The integral $\int_Y \psi(y)d\nu$ becomes the dot product 
$\sum_j \psi_j \nu_j$. Thus, the functional that corresponds to the dual Kantorovich problem becomes a function $F$
that depends on $n$ variables (the $\psi_j$). Let us now replace the c-conjugate $\psi^c$ with its expression, which gives
\eqref{eqn:sdk3}. The integral in the left term can be reorganized, by grouping 
the points of $X$ for which the same point $y_j$ minimizes $c(x,y_j) - \psi_j$, which gives \eqref{eqn:sdk4},
where the \emph{Laguerre cell} $\Lag_\psi^c(y_j)$ is defined by:
$$
    \Lag_\psi^c(y_j) = \left\{ x \in X \quad | \quad c(x,y_j) - \psi_j \le c(x,y_k) - \psi_k, \ \forall k\neq j \right\}.
$$

The Laguerre diagram, formed by the union of the Laguerre cells, is a classical structure in computational geometry.
In the case of the $L_2$ cost $c(x,y) = 1/2\| x - y \|^2$, it corresponds to the \emph{power diagram},
that was studied by Aurenhammer at the end of the 80's \cite{DBLP:journals/siamcomp/Aurenhammer87}. One of its
particularities is that the boundaries of the cells are rectilinear, making it reasonably easy to design computational
algorithms to construct them.


\begin{figure*}
      \centerline{
        \includegraphics[width=50mm]{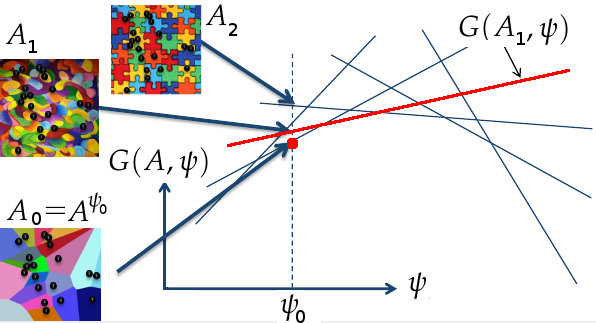}
         \hspace{5mm}
         \includegraphics[width=50mm]{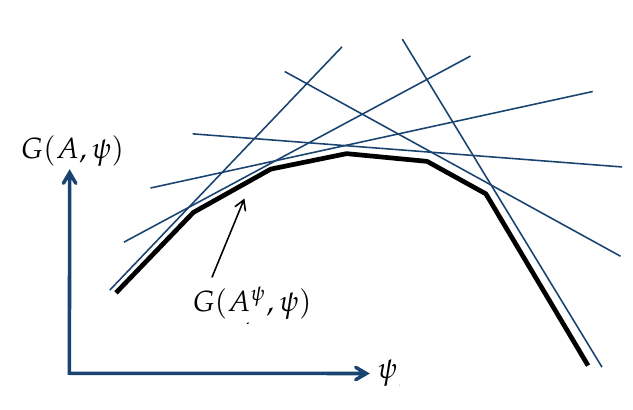}         
      }
      \caption{
        The objective function of the dual Kantorovich problem is concave, because its graph
        is the lower envelope of a family of affine functions.
      }
      \label{fig:concave}
\end{figure*}

\subsection{Concavity of $F$}
The objective function $F$ is a particular case of the Kantorovich dual,
and naturally inherits its properties, such as its concavity (that we did
not discuss yet). This property is interesting both from a theoretical
point of view, to study the existence and uniqueness of solutions, and from
a practical point of view, to design efficient numerical solution mechanisms.
In the semi-discrete case, the concavity of $F$ is easier to prove than in
the general case. We summarize here the proof by 
Aurenhammer \emph{et. al} \cite{DBLP:conf/compgeom/AurenhammerHA92},
that leads to an efficient algorithm
\cite{DBLP:journals/cgf/Merigot11}, \cite{journals/M2AN/LevyNAL15}, \cite{DBLP:journals/corr/KitagawaMT16}.

\begin{theorem}
The objective function $F$ of the semi-discrete Kantorovich dual problem \eqref{eqn:sdk1} is concave.
\end{theorem}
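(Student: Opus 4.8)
The plan is to establish concavity by exploiting the same \emph{envelope of affine functions} structure that appeared in Observation \ref{obs:PhiConvexity}, but now in the dual direction: a lower envelope of affine functions is concave, just as an upper envelope is convex (Figure \ref{fig:convex}). The key intermediate claim is that, for each fixed point $x \in X$, the map $\psi \mapsto \psi^c(x)$ is concave as a function of the $n$ variables $(\psi_1, \ldots, \psi_n)$.

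First I would fix $x \in X$ and inspect the $c$-transform appearing in the functional,
$$
\psi^c(x) = \inf\limits_{y_j \in Y}\left[ c(x,y_j) - \psi_j \right] = \min\limits_{1 \le j \le n}\left[ c(x,y_j) - \psi_j \right].
$$
For each index $j$, the map $\psi \mapsto c(x,y_j) - \psi_j$ is affine in $\psi$: once $x$ is fixed, $c(x,y_j)$ is a constant and $-\psi_j$ is linear. Hence $\psi \mapsto \psi^c(x)$ is the pointwise minimum of a finite family of affine functions, and such a lower envelope is concave. Concretely, for $t \in [0,1]$ one has the pointwise inequality
$$
\left( t\psi + (1-t)\psi' \right)^c(x) \ \ge \ t\, \psi^c(x) + (1-t)\, (\psi')^c(x),
$$
which follows from the elementary fact that the minimum of a sum dominates the sum of the minima.

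Next I would integrate this inequality over $X$ against the density $u$. Since $u(x) \ge 0$, integrating the pointwise inequality against the non-negative weight $u(x)\,dx$ preserves its direction, so $\psi \mapsto \int_X \psi^c(x)\, u(x)\, dx$ is concave. Finally, the remaining term $\sum_{j=1}^n \psi_j \nu_j$ is linear in $\psi$, hence concave, and the sum of two concave functions is concave. This yields that $F$ is concave and completes the argument.

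There is essentially no hard obstacle here; the only point deserving care is the passage from the pointwise concavity of $x \mapsto \psi^c(x)$ to the concavity of its integral, which is exactly where the sign condition $u \ge 0$ (together with the integrals being well defined) is used. I would also remark that the reasoning uses the finiteness of $Y$ only to turn the infimum into a minimum: the inequality above holds verbatim for an infimum over an arbitrary $Y$, so concavity of the Kantorovich dual objective is not special to the semi-discrete setting, merely more elementary there.
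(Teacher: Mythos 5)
Your proof is correct, and it rests on the same key fact as the paper's --- a lower envelope (pointwise infimum) of affine functions of $\psi$ is concave --- but you apply it with a different decomposition. The paper introduces the auxiliary function $G(A,\psi)$ indexed by an \emph{assignment} $A: X \rightarrow [1\ldots n]$, shows that each $\psi \mapsto G(A,\psi)$ is affine after integrating over $X$ (equation \eqref{ch4:eqn:ot:Gaffine}), and then identifies $\int_X \psi^c\,u\,dx = G(A^\psi,\psi) = \inf_A G(A,\psi)$ as the lower envelope of that (large) family of affine functions. You instead take the envelope \emph{pointwise}: for each fixed $x$, the map $\psi \mapsto \psi^c(x) = \min_j\left[c(x,y_j)-\psi_j\right]$ is a minimum of $n$ affine functions, hence concave, and you then integrate the resulting concavity inequality against the non-negative weight $u(x)\,dx$. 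The two routes are mathematically equivalent --- minimizing the integrand pointwise is exactly what the optimal assignment $A^\psi$ does --- but yours is leaner: it needs only the finite family indexed by $j$ rather than the family of all assignments, and, as you observe, it generalizes verbatim to an arbitrary target $Y$ and to a general source measure, since nothing beyond $\psi^c = \inf_y\left[c(\cdot,y)-\psi(y)\right]$ and positivity of $\mu$ is used. What the paper's formulation buys in exchange is that $G(A,\psi)$, being affine in $\psi$ for a frozen assignment $A$, is reused immediately afterwards to compute the gradient $\partial F/\partial\psi_j$ by an envelope-type argument; your pointwise version would have to redo that bookkeeping when the derivatives are needed.
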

\begin{proof}
Consider the function $G$ defined by:
\begin{equation}
G(A, \left[ \psi_1, \ldots \psi_n \right]) = \int\limits_X \left( c(x, y_{A(x)}) - \psi_{A(x)} \right) u(x) dx,
\end{equation}
and parameterized by an \emph{assignment} $A~: X \rightarrow [1 \ldots n]$, that is, a function that
associates with each point $x$ of $X$ the index $j$ of one of the points $y_j$.
If we denote $A^{-1}(j) = \left\{ x | A(x) = j \right\}$, then $G$ can be also written as:
\begin{equation}
\begin{array}{lcl}
G(A,\psi) & = & \sum\limits_j \int\limits_{A^{-1}(j)} \left(c(x,y_j) - \psi_j\right) u(x) dx \\[2mm]
          & = & \sum\limits_j \int\limits_{A^{-1}(j)} c(x,y_j) u(x) dx - \sum\limits_j \psi_j \int\limits_{A^{-1}(j)} u(x) dx.
\end{array}
\label{ch4:eqn:ot:Gaffine}
\end{equation}
The first term does not depend on the $\psi_j$, and the second one is a linear combination of the $\psi_j$ coefficients, thus,
for a given fixed assignment $A$, $\psi \mapsto G(A, \psi)$ is an affine function of $\psi$. Figure \ref{fig:concave} depicts
the appearance of the graph of $G$ for different assignment. The horizontal axis symbolizes the components of the
vector $\psi$ (of dimension $n$) and the vertical axis the value of $G(A,\psi)$. For a given assignment $A$, the graph
of $G$ is an hyperplane (symbolized here by a straight line). 

Among all the possible assignments $A$, we distinguish $A^\psi$ that associates with a point $x$ the index $j$ of the Laguerre
cell $x$ belongs to \footnote{$A$ is undefined on the set of Laguerre cell boundaries, this does not cause any difficulty
because this set has zero measure.}, that is:
$$
    A^\psi(x) = \arg \min\limits_j \left[ c(x,y_j) - \psi_j \right].
$$

For a fixed vector $\psi = \psi^0$, among all the possible assignments $A$,
the assignment $A^{\psi^0}$ minimizes the value $G(A, \psi^0)$,
because it minimizes the integrand pointwise (see Figure \ref{fig:concave} on the left.
Thus, since $G$ is affine with respect to $\psi$, the graph of the function $\psi \rightarrow
G(A^\psi,\psi)$ is the lower envelope of a family of hyperplanes (symbolized as straight lines
in Figure \ref{fig:concave} on the right), hence $\psi \rightarrow
G(A^\psi,\psi)$ is a concave function. Finally, the objective function $F$ of the
dual Kantorovich problem can be written as
$F(\psi) = G(A^\psi,\psi) + \sum_j \nu_j \psi_j$, that is, the sum of
a concave function and a linear function, hence it is also a concave function.
\end{proof}

\subsection{The semi-discrete optimal transport map}
Let us now examine the $c$-superdifferential $\partial^{c} \psi$, that is,
the set of points ($x \in X$, $y \in Y$) connected by the optimal
transport map. We recall that the $c$-superdifferential $\partial^{c} \psi$ can be defined alternatively as
$ \partial^{c} \psi = \left\{ (x,y_j) \ | \ \psi^c(x) + \psi_j = c(x,y_j) \right\}$. Consider
a point $x$ of $X$ that belongs to the Laguerre cell $\Lag_\psi^c(y_j)$. The 
$c$-superdifferential $[\partial^{c}\psi](x)$ at $x$ is defined by:
\begin{eqnarray}
\label{eqn:Tdisc1}
 [\partial^{c}\psi](x) &=& \left\{ y_k \ | \ \psi^c(x) + \psi_k = c(x,y_k) \right\}  \\
\label{eqn:Tdisc2}   
   & =& \left\{ y_k \ | \ \inf_{y_l} \left[ c(x,y_l) - \psi_l \right] + \psi_k = c(x,y_k) \right\}  \\
\label{eqn:Tdisc3}     
&  = &  \left\{ y_k \ | \ c(x,y_j) - \psi_j + \psi_k = c(x,y_k) \right\}  \\
\label{eqn:Tdisc4}     
 & =&   \left\{ y_k \ | \ c(x,y_j) - \psi_j  = c(x,y_k) - \psi_k \right\}  \\
\label{eqn:Tdisc5}          
 & =&   \left\{ y_j \right\}. 
\end{eqnarray}

In the first step \eqref{eqn:Tdisc2}, we replace $\psi^c$ by its definition, then we
use the fact that $x$ belongs to the Laguerre cell of $y_j$ \eqref{eqn:Tdisc3}, and finally,
the only point of $Y$ that satisfies \eqref{eqn:Tdisc4} is $y_j$ because we have supposed 
$x$ \emph{inside} the Laguerre cell of $y_j$. \\

To summarize, the optimal transport map $T$ moves each point $x$ to the point $y_j$
associated with the Laguerre cell $\Lag_\psi^c(y_j)$ that contains $x$. The vector $\psi_1, \ldots \psi_n$
is the unique vector that maximizes the discrete dual Kantorovich function $F$ such that $\psi$ is c-concave.
It is possible to show that $\psi$ is c-concave if and only if no Laguerre cell is empty of matter, that is the integral
of $u$ is non-zero on each Laguerre cell. Indeed, we have the following results:

\begin{theorem}\label{c-concava_always}
Let $Y=\{ y_1, \ldots, y_n \}$ be a set of $n$ points. Let $\psi$ be  any  function  defined on $Y$ such that $\Lag_\psi^c(y_i)$ are not empty sets. Then $\psi$ is a $c$-concave
function. 
\end{theorem}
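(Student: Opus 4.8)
The plan is to show that $\psi$ coincides with its double $c$-transform. By definition, a function on $Y$ is $c$-concave exactly when it is the $c$-transform of some function on $X$; writing the (symmetric) $c$-transform of a function $g$ on $X$ as $g^c(y)=\inf_{x\in X}[c(x,y)-g(x)]$, it therefore suffices to exhibit such a $g$ and verify it reproduces $\psi$. The natural candidate is $g=\psi^c$, so the whole proof reduces to establishing, for each index $j$,
$$
(\psi^c)^c(y_j)=\inf_{x\in X}\left[\,c(x,y_j)-\psi^c(x)\,\right]=\psi_j .
$$

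I would prove the two inequalities separately. The bound $(\psi^c)^c(y_j)\ge\psi_j$ holds with no hypothesis whatsoever: directly from the definition $\psi^c(x)=\inf_k[c(x,y_k)-\psi_k]\le c(x,y_j)-\psi_j$, so $c(x,y_j)-\psi^c(x)\ge\psi_j$ for every $x\in X$, and taking the infimum over $x$ preserves the inequality. This is the routine ``Fenchel--Moreau'' direction and carries no difficulty.

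The reverse inequality $(\psi^c)^c(y_j)\le\psi_j$ is where the hypothesis enters and is the crux of the argument. Here I would use that $\Lag_\psi^c(y_j)$ is non-empty to choose a point $x_0\in\Lag_\psi^c(y_j)$. By the defining inequalities of the Laguerre cell, $c(x_0,y_j)-\psi_j\le c(x_0,y_k)-\psi_k$ for all $k$, which means the infimum defining $\psi^c(x_0)$ is attained at the index $j$, i.e.\ $\psi^c(x_0)=c(x_0,y_j)-\psi_j$. Evaluating the outer infimum at this particular $x_0$ then gives $c(x_0,y_j)-\psi^c(x_0)=\psi_j$, so the infimum over all $x$ is at most $\psi_j$.

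Combining the two bounds yields $(\psi^c)^c(y_j)=\psi_j$ for every $j$, hence $\psi=(\psi^c)^c=g^c$ with $g=\psi^c$, and therefore $\psi$ is $c$-concave by definition. The only genuinely substantive step is the reverse inequality: its entire content is that non-emptiness of each Laguerre cell supplies a point at which the infimum in the $c$-transform is realized exactly by the index $j$, which is precisely what certifies that $\psi$ has not been strictly raised by the double transform. I expect this to be the single place where one must be careful, and where the hypothesis is indispensable.
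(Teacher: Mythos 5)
Your proposal is correct and follows essentially the same route as the paper: both arguments establish $\psi=(\psi^c)^c$ on $Y$, with the non-emptiness of $\Lag_\psi^c(y_j)$ supplying the point $x_0$ at which the inner infimum is attained by index $j$, so that the outer infimum cannot exceed $\psi_j$. Your split into the two inequalities (the unconditional $\ge$ and the hypothesis-dependent $\le$) is a slightly cleaner organization than the paper's case analysis over which Laguerre cell $x$ belongs to, but the mathematical content is identical.
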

\begin{proof}
By definition, for $i=1, 2, \ldots, k$
\[
\begin{split}
(\psi^c)^{c}(y_i)&=\inf \limits_{x \in X}\left[c(x,y_i) -\psi^c(x)  \right]\\
&\hspace{-1cm}=\inf \limits_{x \in X}\left[c(x,y_i) -\left( \inf\limits_{y_j \in Y}\left[ c(x,y_j)-\psi(y_j) \right]  \right)  \right]\\
&\hspace{-1cm}=\inf \limits_{x \in X}\left\{
\begin{array}{ll}
\psi(y_i), \quad \quad \quad \quad  \quad  \ \quad \quad \quad \quad \hbox{if $x \in \Lag_\psi^c(y_i)$ } \\
c(x,y_i) - (\overbrace{c(x,y_j)-\psi(y_j)}^{ \quad <  (c(x,y_i)-\psi(y_i))}), \ \hbox{if $x \in \Lag_\psi^c(y_j)$ $(j\neq i)$ } 
\end{array}
\right.\\
&\hspace{-1cm}=\psi(y_i).
\end{split}
\]
This allows to conclude that $\psi$ is a $c$-convex function (see \cite[Proposition 1.34]{Santambrogio}).
\end{proof}

Moreover, the converse of the theorem is also true:
\begin{theorem}\label{c-concava_always2}
Let $Y=\{ y_1, \ldots, y_n \}$ be a set of $n$ points. Let $\psi$ be  a $c$-concave
function  defined on $Y$. Then the sets  $\Lag_\psi^c(y_i)$ are not empty  for all $i=1, \ldots, n$,. \end{theorem}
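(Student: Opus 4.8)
The plan is to argue by contradiction, reducing everything to the single identity that characterizes $c$-concavity. First I would recall the general fact that for \emph{any} function $\psi$ on the finite set $Y$ one has $(\psi^c)^c \ge \psi$ pointwise: for each $x$ the infimum defining $\psi^c(x)$ is $\le c(x,y_i)-\psi_i$, so $c(x,y_i)-\psi^c(x)\ge \psi_i$, and taking the infimum over $x$ gives $(\psi^c)^c(y_i)\ge \psi_i$. Together with the computation already carried out in the proof of Theorem~\ref{c-concava_always}, this shows that $\psi$ being $c$-concave is equivalent to the equality $(\psi^c)^c=\psi$, that is $(\psi^c)^c(y_i)=\psi_i$ for every $i$. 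This equality is the only consequence of the hypothesis I will use.

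Next, suppose for contradiction that some Laguerre cell $\Lag_\psi^c(y_\ell)$ is empty. The key is to rewrite the quantity that $c$-concavity pins down:
$$(\psi^c)^c(y_\ell)=\inf_{x\in X}\bigl[c(x,y_\ell)-\psi^c(x)\bigr]=\inf_{x\in X}\ \max_{j}\bigl[c(x,y_\ell)-c(x,y_j)+\psi_j\bigr].$$
The term $j=\ell$ in the maximum is exactly $\psi_\ell$, so the bracketed quantity is $\ge \psi_\ell$ for every $x$, and it equals $\psi_\ell$ precisely when $c(x,y_\ell)-\psi_\ell\le c(x,y_j)-\psi_j$ for all $j$, i.e. exactly when $x\in\Lag_\psi^c(y_\ell)$. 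Since that cell is empty by assumption, the inequality is strict at every point of $X$.

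The delicate step is to upgrade \emph{strict at every point} to \emph{the infimum is strictly above $\psi_\ell$}. Here I would use that $x\mapsto c(x,y_\ell)-\psi^c(x)$ is a finite maximum of continuous functions, hence continuous, so on the compact domain $X$ it attains its minimum at some $x^\star$; because $x^\star\notin\Lag_\psi^c(y_\ell)$ that minimum is $>\psi_\ell$, whence $(\psi^c)^c(y_\ell)>\psi_\ell$. (For the $L_2$ cost one can avoid invoking compactness altogether: the inner function is then piecewise affine and convex, and a polyhedral convex function that is bounded below attains its infimum.) This contradicts $(\psi^c)^c(y_\ell)=\psi_\ell$, and the contradiction proves that no cell can be empty.

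The main obstacle is exactly that last upgrade: pointwise strict positivity of $c(x,y_\ell)-\psi^c(x)-\psi_\ell$ does not by itself forbid the infimum from dropping to $\psi_\ell$ along a minimizing sequence that escapes to the boundary or to infinity. Everything therefore hinges on attainment of the minimum, which is why I expect to need continuity of $c$ together with compactness of $X$ (or, for the $L_2$ cost, the polyhedral structure of the cells exhibited in the definition of $\Lag_\psi^c$).
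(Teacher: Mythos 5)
Your proof is correct and follows the same overall strategy as the paper's: assume some cell $\Lag_\psi^c(y_{i_0})$ is empty, deduce $(\psi^c)^c(y_{i_0})>\psi(y_{i_0})$, and contradict the characterization of $c$-concavity as $(\psi^c)^c=\psi$. Where you diverge is precisely at the \emph{delicate step} you flag, and there your version is the more rigorous one. The paper extracts, for each $x\in X$, an index $j\neq i_0$ and an $\epsilon_j>0$ with $c(x,y_{i_0})-\psi(y_{i_0})\ge c(x,y_j)-\psi(y_j)+\epsilon_j$, and then concludes via $\inf_j\left[\psi(y_{i_0})+\epsilon_j\right]>\psi(y_{i_0})$, as if the infimum ranged over the finitely many indices $j$; but the $\epsilon$ in that argument genuinely depends on the point $x$, not only on $j$, so the relevant infimum is over all $x\in X$ and nothing there prevents it from collapsing to $0$ along a minimizing sequence. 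Your appeal to continuity of $x\mapsto c(x,y_\ell)-\psi^c(x)$ (a finite maximum of continuous functions) together with compactness of $X$ to obtain attainment of the minimum at some $x^\star\notin\Lag_\psi^c(y_\ell)$ is exactly what is needed to close this gap. The only caveat is that compactness of $X$ and continuity of $c$ (or, for the $L_2$ cost, the polyhedral alternative you sketch) are hypotheses you correctly make explicit, whereas the paper leaves its standing assumptions on $X$ and $c$ implicit.
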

\begin{proof}
Reasoning by contradiction, we suppose that $\psi$ is a $c$-concave function and there exist $i_0 \in \{ 1, \ldots, n \}$ such that $\Lag_\psi^c(y_{i_0})=\emptyset$. Then, from definition
\[
\begin{split}
\forall x\in X, \ \exists  j \in \{ 1, \ldots, n \} \ \hbox{with}\ j \neq i_0, \ \hbox{and}\ \epsilon_j>0\  \hbox{such that} \\
c(x,y_{i_0})-\psi(y_{i_0}) \geq c(x,y_j)-\psi(y_j)+\epsilon_j.
\end{split}
\]
We will write $x=x_j$. Thus,
\begin{equation}\label{abs}
\begin{split}
c(x_j,y_{i_0}) & -\left( \inf\limits_{y_j \in Y}\left[ c(x_j,y_j)-\psi(y_j) \right]  \right)\\
& \geq c(x_j,y_{i_0}) -\left( c(x_j,y_{i_0})-\psi(y_{i_0})-\epsilon_j \right)\\
&=\psi(y_{i_0})+\epsilon_j.
\end{split}
\end{equation}
Therefore,
\[
\begin{split}
(\psi^c)^{c}(y_{i_0})&=\inf \limits_{x \in X}\left[c(x,y_{i_0}) -\left( \inf\limits_{y_j \in Y}\left[ c(x,y_j)-\psi(y_j) \right]  \right)  \right]\\
&=\inf \limits_{j}\left[c(x_j,y_{i_0}) -\left( \inf\limits_{y_j \in Y}\left[ c(x_j,y_j)-\psi(y_j) \right]  \right)  \right]\\
&=\inf \limits_{j}\left[\psi(y_{i_0})+\epsilon_j \right]\\
&>\psi(y_{i_0}).
\end{split}
\]
This contradicts the fact that $\psi$ is a c-concave function, because  \cite[Proposition 1.34]{Santambrogio}.
\end{proof}


We now proceed to compute the first and second order derivatives of the objective function $F$. These derivatives
are useful in practice to design computational algorithms.


\subsection{First-order derivatives of the objective function}

Since it is concave,  $F$ admits a unique maximum $\psi^*$, characterized by $\nabla
F(\psi^*) = 0$ where $\nabla F$ denotes the gradient. Let us now examine the form of the gradient $\nabla F
= \nabla \left(G(A\psi, \psi) + \sum_{j=1}^n \psi_j \nu_j\right)$.
By replacing $G(A_\psi, \psi)$ with its
expression \eqref{ch4:eqn:ot:Gaffine}, one obtains:
\[
\begin{split} 
\frac{\partial G}{\partial \psi_j}(\psi)&=\lim_{t\to 0}\frac{G(\psi + t{e}_j)-G(\psi )}{t}\\
&\hspace{-1.5cm}=\lim_{t\to 0}\frac{1}{t} \left\{ \int_{X} \inf\left[ c(x,y_1)-\psi_1, \ldots, c(x,y_j)-\psi_j - t,\ldots, c(x,y_n)-\psi_n \right] \right.\\
&\hspace{0.3cm}\left.-\inf\limits_{i}\left[ c(x,y_i)-\psi_i \right] u(x)  dx\right\}.  
\end{split}
\]
Let $x \in \Lag_\psi^c(y_m)$, then for $t$ small enough we have
\[
\begin{split}
\inf & \left[ c(x,y_1)-\psi_1, \ldots, c(x,y_j)-\psi_j - t,\ldots, c(x,y_k)-\psi_n \right]\\
&=c(x,y_m)-\psi_m.
\end{split}
\]
Indeed, since $c(x,y_m)-\psi_m < c(x,y_i)-\psi_i$ for all $i\neq m$, in particular, if $m\neq j$: $c(x,y_m)-\psi_m < c(x,y_j)-\psi_j$.
Thus, for $t$ small enough 
\[
c(x,y_m)-\psi_m \leq c(x,y_j)-\psi_j -t.
\]
In the case that $m=j$,
\[
\begin{split}
\inf &\left[ c(x,y_1)-\psi_1, \ldots, c(x,y_j)-\psi_j - t,\ldots, c(x,y_k)-\psi_k \right]\\
&=c(x,y_j)-\psi_j - t.
\end{split}
\]
Consequently, we obtain that
\[
\begin{split}
\frac{\partial G}{\partial \psi_j}(\psi)= - \int_{\Lag_\psi^c(y_j)} u(x)  dx. 
\end{split}
\]
Recalling that the objective function $F$ is given by $F(\psi) = G(A_\psi, \psi) + \sum_j \nu_j \psi_j$, we finally obtain:
\begin{equation}
\frac{\partial F}{\partial \psi_j} = \nu_j - \int\limits_{\Lag_\psi^c(y_j)} u(x) dx.
\label{eqn:grad}
\end{equation}
Since the objective function $F$ is concave, it admits a unique maximum.
At this maximum, all the components of the gradient vanish, this implies
that the quantity of matter obtained at $y_j$, that is the integral of
$u$ on the Laguerre cell of $y_j$, corresponds to the prescribed quantity
of matter, that is $\nu_j$.

\subsection{Second order derivatives of the objective function}
The coefficients of the Hessian matrix $\partial^2 F / \partial \psi_i \partial \psi_j$
are slightly more difficult to compute, since here, we cannot invoke the envelope theorem.
We cannot avoid invoking Reynold's formula. We do not detail the computations for length
considerations, but give the final result.

In the particular case of the $L_2$ cost, that is $c(x,y) = 1/2\| x - y \|^2$, the second-order derivatives are given by:
\begin{equation}\label{eqn:hess}
\begin{split}
\frac{\partial^2 G}{\partial \psi_i\partial \psi_j}(\psi)&=\int\limits_{\Lag_\psi^c(y_i)\cap \Lag_\psi^c(y_j)} \frac{u(x)}{\| y_i-y_j\|} \ dS(x) \quad ( i\neq j),\\  
\frac{\partial^2 G}{\partial \psi_j^2}(\psi)&=-\sum\limits_{i \neq j}\frac{\partial^2 G}{\partial \psi_i\partial \psi_j}(\psi).
\end{split}
\end{equation}

\subsection{A computational algorithm for $L_2$ semi-discrete optimal transport}
\label{sect:algo}
With the definition of $F(\psi)$, the expression of its first order derivatives
(gradient $\nabla F$) and second order derivatives (Hessian matrix $\nabla^2 F = \left(\partial^2 F / \partial \psi_i \partial \psi_j\right)_{ij}$),
we are now equipped to design a numerical solution mechanism that computes semi-discrete optimal transport by
maximizing $F$, based on a particular version \cite{DBLP:journals/corr/KitagawaMT16} of Newton's optimization method \cite{Nocedal2006NO}:
$$
\begin{array}{ll}
   \mbox{\bf Input:}  & \mbox{a mesh that supports the source density } u \\
                        & \mbox{the points } (y_j)_{j=1}^n \\
                        & \mbox{the prescribed quantities } (\nu_j)_{j=1}^n \\[2mm]
   \mbox{\bf Output:} & \mbox{the (unique) Laguerre diagram }\Lag_\psi^c \mbox{ such that:}\\
                        & \quad \int\limits_{\Lag_\psi^c(y_j)} u(x)dx = \nu_j \quad \forall j\\[5mm]
                        \hline \\
   (1) & \psi \leftarrow [ 0 \ldots 0 ] \\
   (2) & \mbox{While convergence is not reached} \\
   (3) & \quad \mbox{Compute } \nabla F \mbox{ and } \nabla^2 F \\
   (4) & \quad \mbox{ Find } p \in {\mathbb R}^n \mbox{ such that } \nabla^2 F(\psi) p = -\nabla F(\psi) \\
   (5) & \quad \mbox{ Find the descent parameter } \alpha \\
   (6) & \quad \psi \leftarrow \psi + \alpha p \\
   (7) & \mbox{End while}
\end{array}
$$

The source measure is given by its density, that is a positive piecewise linear function $u$,
supported by a triangulated mesh (2D) or tetrahedral mesh (3D) of a domain $X$. The target measure
is discrete, and supported by the pointset $Y = (y_j)_{j=1}^n$. Each target point will receive the prescribed
quantity of matter $\nu_j$. Clearly, the prescriptions should be balanced with the available resource,
that is $\int_X u(x)dx = \sum_j \nu_j$. The algorithm computes for each point of the target measure
the subset of $X$ that is affected to it through the optimal transport, $T^{-1}(y_j) = \Lag_\psi^c(y_j)$,
that corresponds to the Laguerre cell of $y_j$. The Laguerre diagram is completely determined
by the vector $\psi$ that maximizes $F$. \\

Line (2) needs a criterion for convergence. The classical convergence
criterion for a Newton algorithm uses the norm of the gradient of
$F$.  In our case, the components of the gradient of $F$ have a
geometric meaning, since $\partial F / \partial \psi_j$ corresponds to
the difference between the prescribed quantity $\nu_j$ associated with
$j$ and the quantity of matter present in the Laguerre cell of $y_j$
given by $\int_{\Lag_\psi^c(y_j)}\!\! u(x)dx$. Thus, we can decide to stop
the algorithm as soon as the largest absolute value of a component
becomes smaller than a certain percentage of the smallest
prescription. Thus we consider that convergence is reached if $\max |
\nabla F_j | < \epsilon \min_j \nu_j$, for a user-defined $\epsilon$
(typically 1\% in the examples below). \\

Line (3) computes the coefficients of the gradient and the Hessian matrix of $F$, using \eqref{eqn:grad} and
\eqref{eqn:hess}. These computations involve integrals over the Laguerre cells and over their boundaries.
For the $L_2$ cost $c(x,y) = 1/2\| x - y \|^2$, the boundaries of the Laguerre cells are rectilinear, which
dramatically simplifies the computations of the Hessian coefficients \eqref{eqn:hess}. In addition,
it makes it possible to use efficient algorithms to compute the Laguerre diagram
\cite{DBLP:journals/cj/Bowyer81,journals/cj/Watson81}. Their implementation is available in several
programming libraries, such as GEOGRAM\footnote{\url{http://alice.loria.fr/software/geogram/doc/html/index.html}} et
CGAL\footnote{\url{http://www.cgal.org}}. Then one needs to compute the intersection between each Laguerre cell
and the mesh that supports the density $u$. This can be done with specialized algorithms \cite{journals/M2AN/LevyNAL15},
also available in GEOGRAM. \\

Line (4) finds the Newton step $p$ by solving a linear system. We use the Conjugate Gradient algorithm \cite{Stiefel1952}
with the Jacobi preconditioner. In our empirical experiments below, we stopped the conjugate iterations as soon
as $\| \nabla^2F p + \nabla F \| / \| \nabla F \| < 10^{-3}$. \\

Line (5) determines the descent parameter $\alpha$. A result due to M\'erigot and Kitagawa \cite{DBLP:journals/corr/KitagawaMT16}
ensures the convergence of the Newton algorithm if the measure of the smallest Laguerre cell remains larger than a certain
threshold (that is, half the smallest prescription $\nu_j$). There is also a condition on the norm of the gradient $\|\nabla F\|$
that we do not repeat here (the reader is referred to M\'erigot and Kitagawa's original article for more details). In our implementation,
starting with $\alpha = 1$, we iteratively divide $\alpha$ by two until
both conditions are satisfied.\\

Let us now make one step backwards and think about the original
definition of Monge's problem \eqref{eqn:monge}. We wish to stress
that the initial constraint (local mass conservation) that
characterizes transport maps was terribly difficult. It is remarkable
that after several rewrites (Kantorovich relaxation, duality,
c-convexity), the final problem becomes as simple as optimizing a
regular ($C^2$) concave function, for which computing the gradient and
Hessian is easy in the semi-discrete case and boils down to evaluating
volumes and areas in a Laguerre diagram. We wish also to stress that
the computational algorithm did not require to make any approximation
or discretization. The discrete, computer version is a particular
setting of the general theory, that fully describes not only transport
between smooth objects (functions), but also transport between less
regular objects, such as pointsets and triangulated meshes.  This is
made possible by the rich mathematical vocabulary (measures) on which
optimal transport theory acts. Thus, the computational algorithm is
an elegant, direct verbatim translation of the theory into a computer program. \\

We now show some computational results and list possible applications of this
algorithm.

\section{Results, examples and applications of $L_2$ semi-discrete transport}

\begin{figure*}
      \centerline{
         \includegraphics[width=\textwidth]{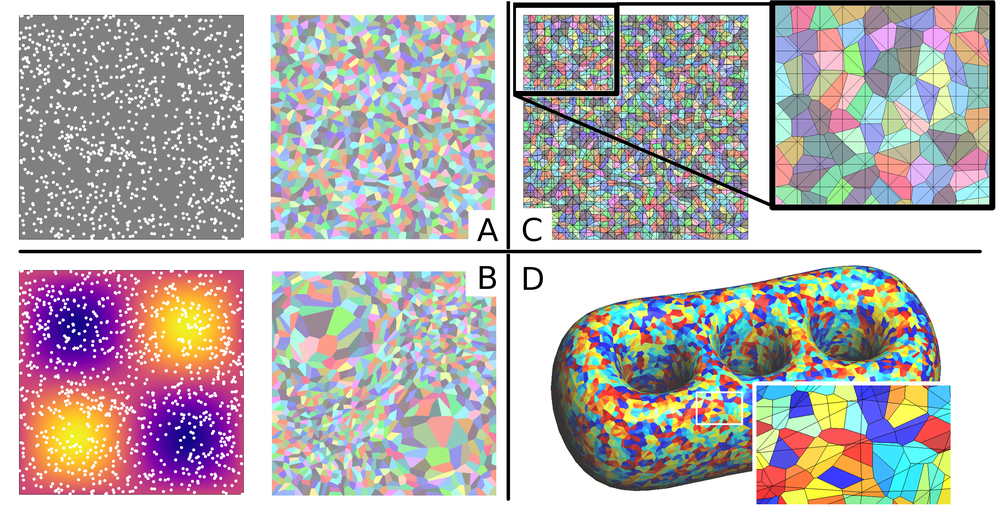}
      }
      \caption{A: transport between a uniform density and a random pointset;
        B: transport between a varying density and the same pointset;
        C: intersections between meshes used to compute the coefficients;
        D: transport between a measure supported by a surface and a 3D pointset.
      }
      \label{fig:exemples_2D}
\end{figure*}  

\begin{figure*}
      \centerline{
         \includegraphics[width=\textwidth]{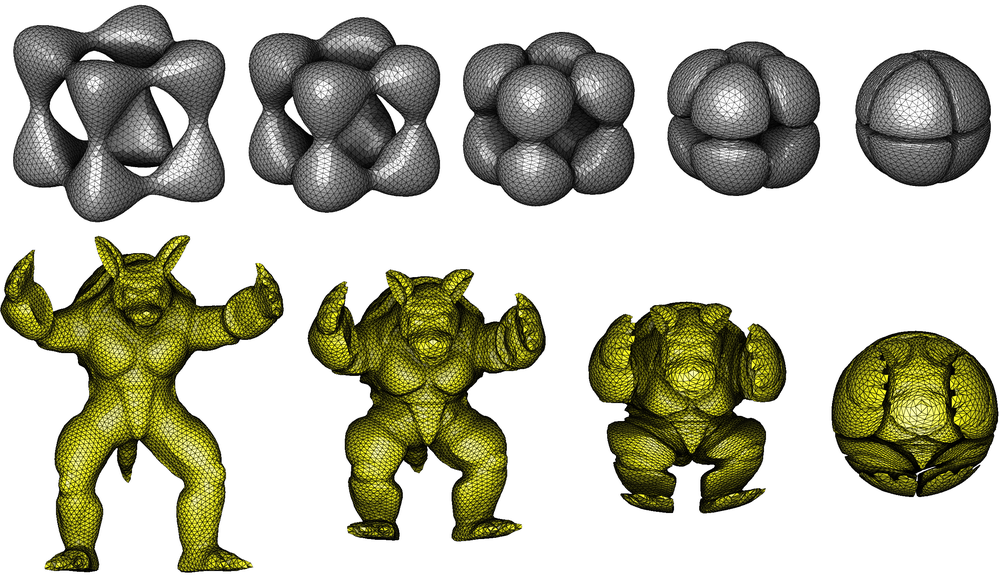}
      }
      \caption{Interpolation of 3D volumes using optimal transport.}
      \label{fig:exemple_3D}
\end{figure*}  

\begin{figure*}
      \centerline{
         \includegraphics[width=\textwidth]{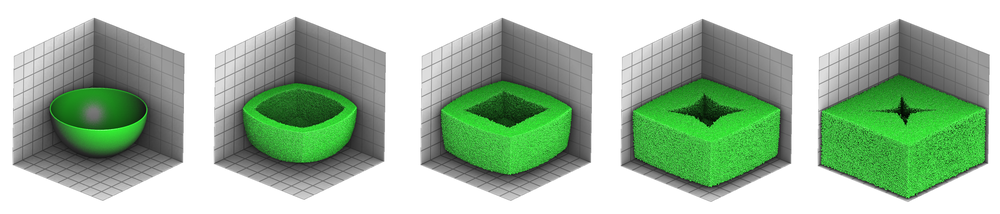}
      }
      \caption{Computing the transport between objects of different
      dimension, from a sphere to a cube. The sphere is sampled with
      10 million points. The cross-section reveals the formation of
      a singularity that has some similarities with the medial axis
      of the cube.}
      \label{fig:SphereCube}
\end{figure*}  

\begin{figure*}
      \centerline{
         \includegraphics[width=\textwidth]{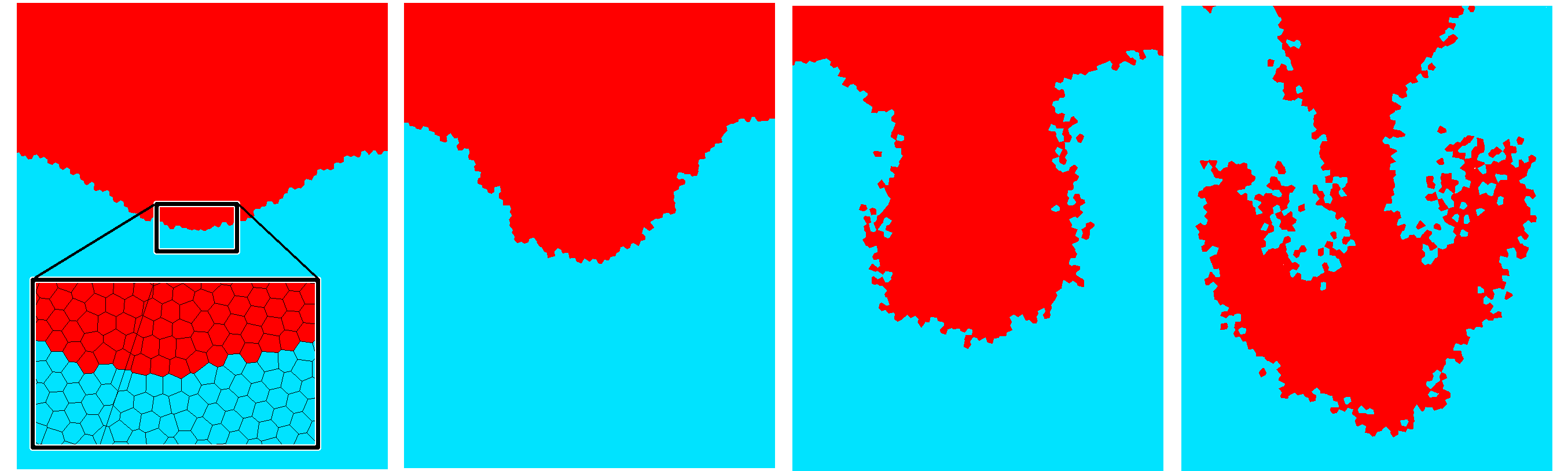}
      }
      \caption{An application of optimal transport to fluid
        simulation: numerical simulation of the Taylor-Rayleigh
        instability using the Gallouet-M\'erigot scheme.}
      \label{fig:instability_2D}
\end{figure*}  

\begin{figure*}
      \centerline{
         \includegraphics[width=\textwidth]{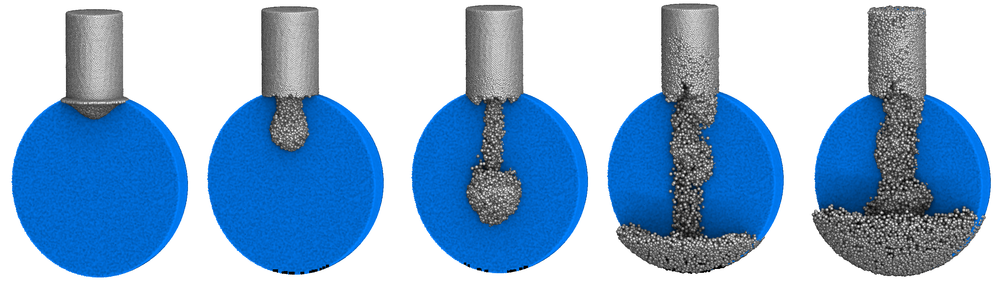}
      }
      \caption{Numerical simulation of an incompressible bi-phasic flow in a bottle.}
      \label{fig:bottle}
\end{figure*}  

\begin{figure}
      \centerline{
         \includegraphics[width=0.7\columnwidth]{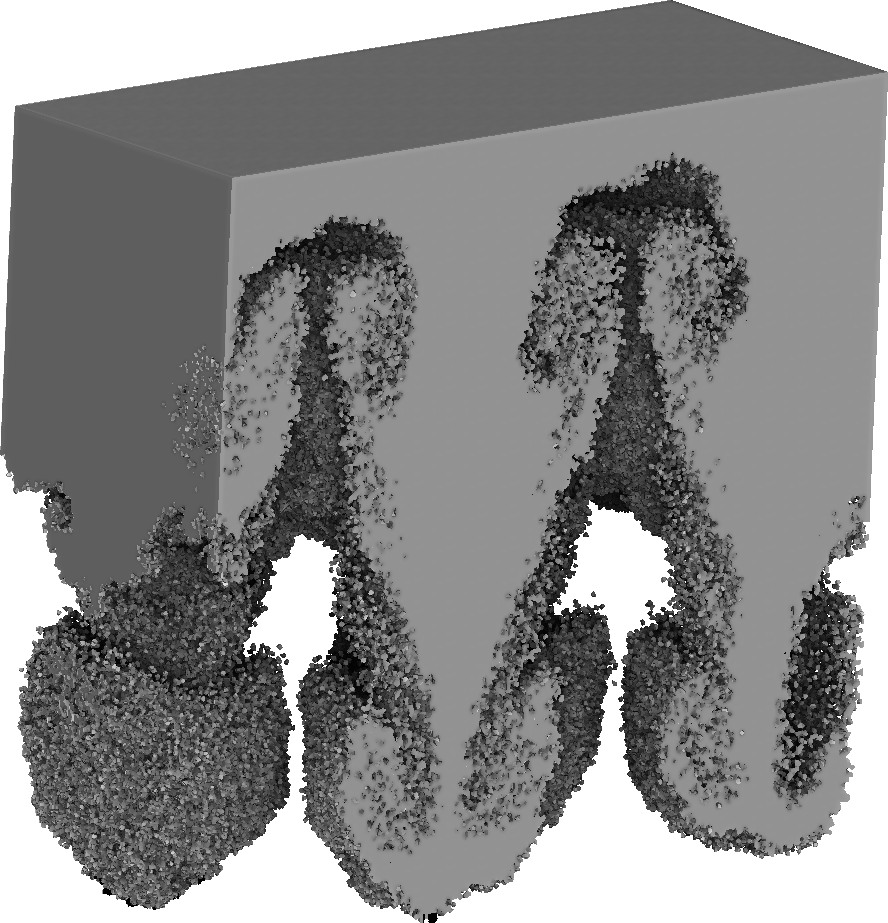}
      }
      \vspace{20mm}
      \centerline{
         \includegraphics[width=0.7\columnwidth]{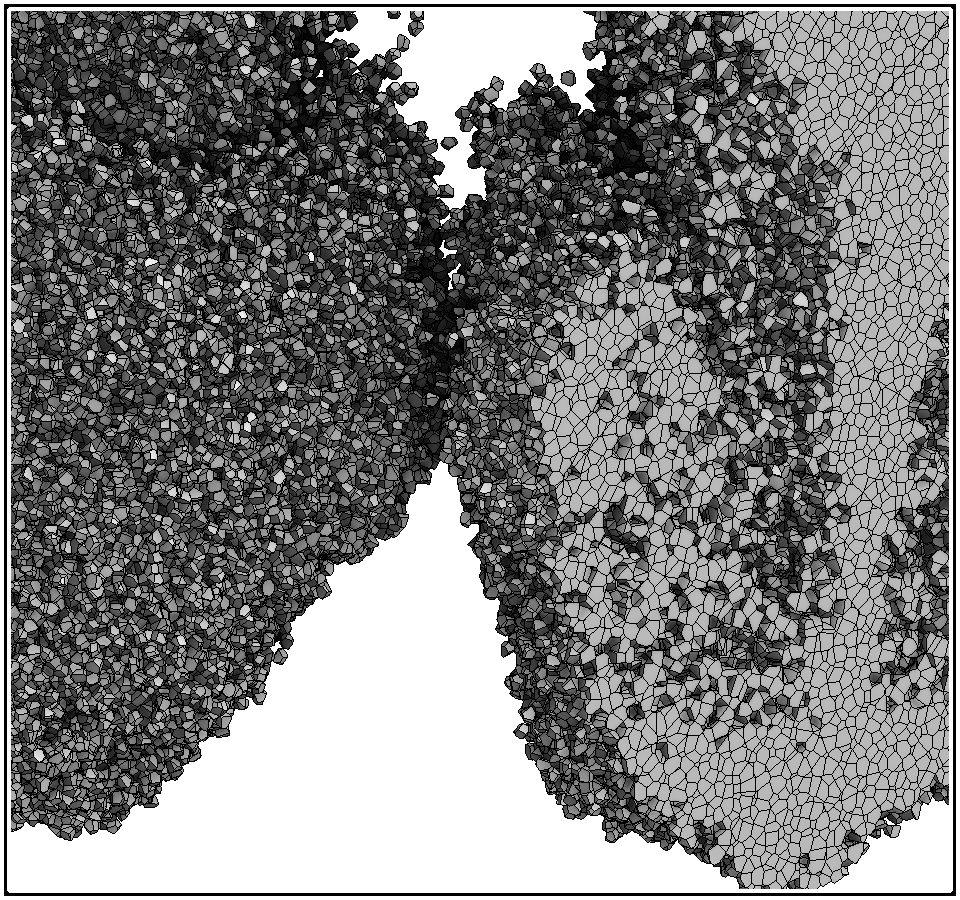}
      }
      \caption{Top: Numerical simulation of the Taylor-Rayleigh instability using a 3D version
        of the Gallouet-M\'erigot scheme, with a cross-section that reveals the internal
        structure of the vortices. Bottom: a closeup that shows the interface between the two fluids, represented by the Laguerre
        facets that bound two Laguerre cells of different fluid elements.}
      \label{fig:instability_3D}
\end{figure}

Figure \ref{fig:exemples_2D} shows some examples of transport in
2D, between a uniform density and a pointset (A). Each Laguerre
cell has the same area. Then we consider the same pointset, but this
time with a density $u(x,y) = 10(1 + \sin(2 \pi x)\sin(2 \pi y))$ (image B).
We obtain a completely different
Laguerre diagram. Each cell of this diagram has the same value for the integrated
density $u$. (C): the coefficients of the gradient and the Hessian of $F$ computed
in the previous section involve integrals of the density $u$ over the Laguerre
cells and over their boundaries. The density $u$ is supported by a triangulated mesh,
and linearly interpolated on the triangles. The integrals of $u$ over the Laguerre
cells and their boundaries are evaluated in closed form, by computing the intersections
between the Laguerre cells and the triangles of the mesh that supports $u$. (D): the
same algorithm can compute the optimal transport between a measure supported by a 3D
surface and a pointset in 3D.

Figure \ref{fig:exemple_3D} shows two examples of volume deformation by optimal
transport. The same algorithm is used, but this time with 3D Laguerre diagrams, and
by computing intersections between the Laguerre cells and a tetrahedral mesh that
supports the source density $u$. The intermediary steps of the animation are generated
by linear interpolation of the positions (Mc. Cann's interpolation).
Figure \ref{fig:SphereCube} demonstrates a more challenging configuration, 
computing transport between a sphere (surface) and a cube (volume).
The sphere is approximated with 10 million Dirac masses. As can be
seen, transport has a singularity that resembles the medial axis of
the cube (rightmost figure). 

Figure \ref{fig:instability_2D} demonstrates an application in computational
fluid dynamics. A heavy incompressible fluid (in red) is placed on top of a
lighter incompressible fluid (in blue). Both fluids tend to exchange their
positions, due to gravity, but incompressibility is an obstacle to their movement.
As a result, vortices are created, and they become faster and faster (Taylor-Rayleigh
instability). The numerical simulation method that we used here
\cite{Gallouet2017} directly computes the trajectories of particles
(Lagrangian coordinates) while taking into account the incompressibility
constraint, which is in general unnatural in Lagrangian coordinates. By providing
a means of controlling the volumes of the Laguerre cells, optimal transport appears
here as an easy way of enforcing incompressibility. Using some efficient geometric
algorithms for the Laguerre diagram and its intersections \cite{journals/M2AN/LevyNAL15},
the Gallouet-M\'erigot numerical scheme for incompressible Euler scheme can be also applied in 3D
to simulate the behavior of non-mixing incompressible fluids (Figure \ref{fig:bottle}).
The 3D version of the Taylor-Rayleigh instability is shown in Figure \ref{fig:instability_3D},
with 10 million Laguerre cells. More complicated vortices are generated (shown here in cross-section).\\

The applications in computer animation and computational physics seem to be
promising, because the semi-discrete algorithm behaves very well in practice.
It is now reasonable to design simulation algorithms that solve a transport
problem in each timestep (as our early computational fluid dynamics of the previous
paragraph do). With our optimized implementation that uses a multicore processor
for the geometric part of the algorithm and a GPU for solving the linear systems,
the semi-discrete algorithm takes no more than a few tens of seconds to solve a problem
with 10 millions unknown. This opens the door to numerical solution mechanisms for
difficult problems. For instance, in astrophysics, the Early Universe Reconstruction
problem  \cite{EUR} consists in going \entreguillemets{backward in time} from
observation data on the repartition of galaxy clusters, to
\entreguillemets{play the Big-Bang movie} backward. Our numerical experiments
tend to confirm Brenier's point of view that he expressed in the 2000's, that
semi-discrete optimal transport can be an efficient way of solving this problem. \\

To ensure that our results are reproducible, the source-code associated with the numerical
solution mechanism used in all these
experiments is available in the EXPLORAGRAM component of the GEOGRAM
programming library\footnote{\url{http://alice.loria.fr/software/geogram/doc/html/index.html}}.

\section*{Acknowledgments}
This research is supported by EXPLORAGRAM (Inria Exploratory Research Grant). The authors wish
to thank Quentin M\'erigot, Yann Brenier, Jean-David Benamou, Nicolas Bonneel and L\'ena\"{\i}c Chizat
for many discussions.

\bibliographystyle{alpha}
\bibliography{ot}

\end{document}